\newtheorem{thm}{Theorem}[section]
\newtheorem{prop}[thm]{Proposition}
\newtheorem{lem}[thm]{Lemma}
\newtheorem{defn}[thm]{Definition}
\newtheorem{rmk}[thm]{Remark}
\numberwithin{equation}{section}
\newcommand\norm[1]{\lVert#1\rVert}
\newcommand\abs[1]{\lvert#1\rvert}
\newcommand\RR{\ensuremath{\mathbb{R}}}
\def\tint{\text{int}}
\newcommand{\ucite}[1]{\cite{#1}}
\def \a{\alpha}  \def \g{\gamma} \def \d{\delta}
\def \t{\theta}   \def \e{\epsilon}
\def \s{\sigma}   \def \o{\omega}
\def\R{\mathbb{R}}
\title{Monotone semiflows with respect to high-rank cones on a Banach space}
\author{Lirui Feng\thanks{School of Mathematical Science, University of Science and Technology of China, Hefei, Anhui, 230026, People¡¯s Republic of China (ruilif@mail.ustc.edu.cn). Supported by China Scholarship Council.}
\and Yi Wang\thanks{Wu Wen-Tsun Key Laboratory, School of Mathematical Science, University of Science and Technology of China, Hefei, Anhui, 230026, People¡¯s Republic of China (wangyi@ustc.edu.cn). Partially supported by NSF of China No.11371338, 11471305, and the Fundamental Research Funds for the Central Universities.}
\and Jianhong Wu\thanks{Department of Mathematics and Statistics, York University, Toronto, ON, M3J 1P3, Canada (wujh@mathstat.yorku.ca). Supported by the Natural Science and Engineering Research Council of Canada and the Canada Research Chairs program.}}
\begin{document}

\maketitle

\begin{abstract}
We consider semiflows in general Banach spaces motivated by monotone cyclic feedback systems or differential equations with integer-valued Lyapunov functionals.  These semiflows enjoy strong monotonicity properties with respect to cones of high ranks, which imply order-related structures on the $\omega$-limit sets of precompact semi-orbits. We show that for a pseudo-ordered precompact semi-orbit the $\omega$-limit set $\Omega$  is either ordered, or is contained in the set of equilibria, or possesses a certain ordered homoclinic property. In particular, we show that if $\Omega$ contains no equilibrium, then $\Omega$ itself is ordered and hence the dynamics of the semiflow on $\Omega$ is topologically conjugate to a compact flow on $\mathbb{R}^k$ with $k$ being the rank. We also establish a Poincar\'{e}-Bendixson type Theorem in the case where $k=2$. All our results are established without the smoothness condition on the semiflow, allowing applications to such cellular or physiological feedback systems with piecewise linear vector fields and to such infinite dimensional systems where the $C^1$-Closing Lemma or smooth manifold theory has not been developed.
\end{abstract}
\begin{keywords}
Monotone Semiflows, Cyclic Feedback Systems, Systems with Discrete-valued Lyapunov Functionals, High-Rank Cones, Homoclinic Property.
\end{keywords}

\begin{AMS}
34C12, 34C15, 34K11, 35B40
\end{AMS}

\pagestyle{myheadings}
\thispagestyle{plain}
\markboth{LIRUI FENG, YI WANG, AND JIANHONG WU}{MONOTONE SEMIFLOWS}

\section{Introduction}

We consider the global dynamics of  a {\it continuous semiflow $\Phi_t$ monotone with respect to a cone $C$ of rank-$k$ on a Banach space $X$}, here roughly speaking, a cone $C$ of rank-$k$ is a closed subset of $X$ that contains a linear subspace of dimension $k$ and no linear subspaces of higher dimension.  Krasnoselskii et al. \cite{KLS} introduced the {\it cones of rank-$k$} and obtained a generalized Krein-Rutman theory associated with these high-rank cones, see also Fusco and Oliva \cite{FO1} for important discussions in the finite dimensional case.

For a given convex cone $K$, $K\cup (-K)$ defines a cone of rank-$1$. Therefore, our considered a class of semiflows includes the order-preserving (monotone) semiflows well studied since the pioneering work of Hirsch \ucite{Hirsch1,foo2,foo3,foo4,foo5,foo6}.  An essential difference between a convex cone $K$ and a high-rank cone is the lack of convexity in the high-rank cone. This difference makes the study of dynamics of semiflows with respect to a high-rank cone a challenging task.

An important example of a continuous semiflow monotone with respect to a cone of rank-$2$ is the following monotone cyclic feedback system
$$
\dot x^i=f^i(x^i, x^{i-1}),\quad i (\textnormal{mod } n),
$$
where each $f^i$ is $C^1$-smooth, and the variable $x^{i-1}$ forces $\dot x^i$ monotonically, namely
$\delta ^i\frac{\partial f^i(x^i, x^{i-1})}{\partial x^{i-1}}>0$ with some $\delta ^i\in \{1, -1\}$. We have either inhibition if $\delta ^i<0$ or excitation if $\delta ^i>0$, and the entire system is said to have a negative  (if $\Pi _{i=1}^n\delta ^i<0$) or positive (if $\Pi _{i=1}^n\delta ^i>0$) feedback. It has been shown that the Poincar\'{e}-Bendixson theorem holds for such a monotone cyclic system (see \cite{M-PSmi90} and references therein), and this remains true even for its extensions involving delays in the feedback \cite{M-PSe96,M-PSe96-2,M-PN}. These monotone cyclic systems arise very naturally from cellular, neural and physiological control systems. However, it should be remarked that in these systems, specially those arising from additive neural network models where the feedback function is piecewise constant (binary on-or-off) or piecewise linear \cite{KaplanGlass, Milton,Wu}, the monotonicity condition  $\delta ^i\frac{\partial f^i(x^i, x^{i-1})}{\partial x^{i-1}}>0$ is normally replaced by strict monotonicity without involving the derivative, the semiflow is still monotone but is no longer $C^1$-smooth. Thus  arguments based on the $C^1$-Closing Lemma and/or invariant manifold theories cannot be applied to such semiflows even in finite dimensional spaces. One of the contributions of our work is to establish the Poincar\'{e}-Bendixson theorem for general semiflows monotone with respect to high-rank cones defined on a general Banach space without the $C^1$- smoothness condition on the semiflows. This contribution is also significant for other classes of dynamical systems with integer-valued Lyapunov functionals including scalar parabolic equations on an interval or a circle \cite{Fied89,JR,SY,SWZ,Te}, and competitive-cooperative tridiagonal systems \cite{FGW13,Fusco87,LW,Smillie,Smith91}. Other models arising from important applications, to which our results can be applied, include $n$-dimensional competitive dynamical systems (see \cite{foo3,HirSm,Smi95,WJ} and references therein) which can be viewed as strongly monotone systems with respect to the cone $C$ of rank-$(n-1)$ whose complemented cone is of rank-$1$.

We will call a semiflow, that is monotone with respect to a particular order defined by a convex cone (recall that this generated a semiflow with respect to a cone of rank-$1$), a {\it classical} monotone semiflow. For the classical monotone semiflow, the pioneering work by Hirsch \ucite{Hirsch1,foo2,foo3,foo4,foo5,foo6} showed that a precompact semi-orbit generically approaches the set of equilibria (referred as generic quasi-convergence). Subsequent studies conclude that for a classical smooth strongly monotone system, precompact semi-orbits are generically convergent to equilibria in the continuous-time case \cite{HirSm,Smi95} or to cycles in the discrete-time case  \cite{Po1,Po2}. There are exactly two different kinds of nontrivial (not an equilibrium) semi-orbits for the classical monotone systems: {\it pseudo-ordered semi-orbits} and {\it unordered semi-orbits}. A nontrivial semi-orbit $O^+(x):=\{\Phi_t(x):t\ge 0\}$ is called {\it pseudo-ordered} if $O^+(x)$ possesses one pair of distinct ordered-points $\Phi_{t}(x),\Phi_{s}(x)$ (i.e.,  $\Phi_{t}(x)-\Phi_{s}(x)\in K\cup (-K)$); while $O^+(x)$ called {\it unordered} if any pair of distinct points in $O^+(x)$ is unordered (see Definition \ref{D:two-type-orbit} with $C=K\cup (-K)$). In the classical monotone semiflows, every pseudo-ordered precompact orbit converges to equilibrium due to the Monotone Convergence Criterion for $\Phi_t$ (see, e.g. \cite[Theorem 1.2.1]{Smi95}), while any unordered orbit can be projected over a certain $1$-codimensional hyperplane outside $K\cup (-K)$. A geometrical insight of the structure of $K\cup (-K)$ yields that it consists of $1$-dimensional linear subspaces and contains no higher dimensional subspaces. As a consequence, as long as $\Phi_t(x)$ is pseudo-ordered, one may project $\Phi_t(x)$ onto a straight line so that the corresponding dynamics is essentially the same as that in an $1$-dimensional system, which makes the Monotone Convergence Criterion quite natural.

Since the Monotone Convergence Criterion plays a key role in the classical monotone systems with respect to $K$, one naturally wonders if it still holds for monotone systems with respect to a high-rank cone. Recently, Sanchez \cite{San09,San10} tackled this problem for  monotone flows $\Phi_t$ on $\mathbb{R}^n$ with respect to a high-rank cone $C$. By using  the $C^1$-Closing Lemma, he proved that if $\Phi_t$ is smooth and $C$-cooperative (see \cite[Definition 6]{San09}), which is stronger than strong monotonicity of the flow, then the closure $\Omega_1$ of any orbit in the omega-limit set $\Omega$ of a pseudo-ordered orbit is ordered with respect to $C$, and hence, the corresponding flow on $\Omega_1$ is essentially $k$-dimensional. The ordering property of the omega-limit set $\Omega$ itself, however, remains unknown (see \cite[p.1984]{San09}).

Our focus here, for a {\it continuous monotone semiflow $\Phi_t$} with respect to a cone $C$ of rank-$k$ on a Banach space $X$, is the ordering property of the omega-limit set $\Omega$ of a pseudo-ordered orbit. We will first show that if $\Phi_t$ is strongly monotone with respect to $C$, then the closure of any orbit in $\Omega$ is ordered (see Theorem A). As we mentioned above, for the finite dimensional case $X=\mathbb{R}^n$, Sanchez \cite{San09} has obtained this result for $C^1$-smooth flows by using the Closing Lemma. However, to the best of our knowledge, the Closing Lemma is not established in general infinite-dimensional spaces. Moreover, even in the case of $X=\mathbb{R}^n$, the results of Sanchez based on the use of Closing Lemma cannot be applied to our setting where the smoothness is not imposed for the semiflow. As a consequence, our results are novel even for the finite-dimensional case, and this gives an affirmative answer to the question posed in \cite[Remark 3]{San09}.

We further examine the ordering property of the omega-limit set $\Omega$ itself. We obtain the following {\it trichotomy} result (see Theorem B):
  \vskip 1mm

(i)   Either $\Omega$ is ordered, i.e., $p-q\in C$ for any $p,q\in \Omega$;

(ii)  Or $\Omega\subset E$ is unordered, where $E$ is the set of all the equilibria of $\Phi_t$;

(iii)  Or $\Omega$ possesses an ordered homoclinic property, that is, there is an ordered and invariant subset $\tilde{B}\subsetneq\Omega$ such that, for any $p\in \Omega\setminus \tilde{B}$,  it holds that $\omega(p)\cup\alpha(p)\subset \tilde{B}$ and $\alpha(p)\subset E$.

\vskip 1mm

\noindent An immediate consequence of the trichotomy is that $\Omega$ itself is ordered if $\Omega$ contains no equilibrium. This partially solves the problem posed in \cite[Line 15-16, p.1984]{San09} even for the infinite dimensional case. As a consequence, the dynamics on $\Omega$ is topologically conjugate to a compact flow on $\mathbb{R}^k$ when $\Omega$ contains no equilibrium.

When $k=2$ and $X=\mathbb{R}^n$, Sanchez \cite{San09} further concluded that if $\Omega$ contains no equilibrium, then $\Omega$ itself is an ordered closed orbit, a conclusion that we will refer as to the Poincar\'{e}-Bendixson Theorem for $\Phi_t$. The crucial tools in his proof are the generalized Perron-Frobenius Theorem (see \cite{FO1}) and theory of invariant manifolds in $\mathbb{R}^n$, which again strongly depend on the $C^1$-smoothness assumption on $\Phi_t$. In Section 5, we remove this smoothness assumption (see Theorem C), based on the  approach motivated by \cite{Smi95} with the use of the chain-recurrent property of $\Omega$.

Finally, we also would like to mention the work by Smith \cite{SmithR-2,SmithR-3} that established a Poincar\'{e}-Bendixson theorem for systems of ordinary differential equations possessing a certain quadratic Lyapunov function. We will show that our Theorem C is indeed a generalization of the Poincar\'{e}-Bendixson theorem of Smith in \cite{SmithR-2,SmithR-3}. Under the smoothness assumption, Sanchez \cite{San09} has successfully established the connections between Smith's results and the strongly monotone systems with respect to some cones of rank-$2$. Our work also shows that such a connection remains valid even for a locally Lipschitz continuous vector field which is not necessarily defined in the whole phase space.

The paper is organized as follows.  In Section 2 we will introduce some notations and definitions, and summarize some established facts for cones of rank-$k$ and the strongly monotone continuous semiflows with respect to such high-rank cones. We will present the main results in this section but defer the detailed proofs to Sections 3-5. In Section 3, by using the non-wondering property, we prove that the closure of any orbit in the omega-limit set $\Omega$ of a pseudo-ordered orbit is ordered (see Theorem A). Based on this, in Section 4, we focus on the ordering property of $\Omega$ itself and prove the corresponding trichotomy of $\Omega$ (see Theorem B). In particular,  when $\Omega$ does not contain any equilibrium, we show that the dynamics on $\Omega$ is topologically conjugate to a compact flow on $\mathbb{R}^k$. For $k=2$, we prove, in Section 5, the Poincar\'{e}-Bendixson Theorem (Theorem C). Finally, in Section 6, we discuss the relationship between strongly monotone continuous systems and other well-known systems mentioned above, as well as a generalization of the extended Poincar\'{e}-Bendixson Theorem of R. A. Smith.

\section{Notations and main results}

We start with some notations and a few definitions. Let $(X,\norm{\cdot})$ be a Banach space. We first define a cone of rank-$k$.

\begin{defn}\label{D:k-cone}
 A closed set $C\subset X$ is called a cone of rank-$k$ ({\it abbr. $k$-cone}) if the following
 are satisfied:

 i) For any $v\in C$ and $l\in \RR,$ $lv\in C$;

 ii)  $\max\{\dim W:C\supset W \text{ linear subspace}\}=k.$
\end{defn}

\noindent Roughly speaking, a $k$-cone $C\subset X$ contains a linear subspace of dimension $k$ and no linear subspaces of higher dimension. To the best of our knowledge, $k$-cone was independently introduced  by Krasnoselskii et al. \cite{KLS} and Fusco \& Oliva \cite{FO1}. In particular, given any traditional convex cone $K\subset X$ (see, e.g. \cite{Smi95}), it is clear that $C=K\cup (-K)$ defines a $1$-cone. Other concrete examples of $k$-cones can be found in \cite{KLS,LW,San09,FO1,M-PSe96,M-PSe96-2,M-PN, SmithR-1,SmithR-2,SmithR-3,Te}. The essential difference between $k$-cones and the $1$-cone $K\cup (-K)$ is the lack of convexity.

A $k$-cone is $C\subset X$ is said to be {\it solid} if the interior $\tint C\ne \emptyset$; and $C$ is called {\it $k$-solid} if there is a $k$-dimensional linear subspace $W$ such that $W\setminus \{0\}\subset \tint C$. Given a $k$-cone $C\subset X$, we call $C$ is {\it complemented} if there exists a $k$-codimensional space $H^{c}\subset X$ such that $H^{c}\cap C=\{0\}$.

For two points $x,y\in X$, we say that {\it $x$ and $y$ are ordered, denoted by $x\thicksim y$}, if $x-y\in C$. Otherwise, $x,y$ are called to be {\it unordered}, which we denote by $x\rightharpoondown y$. The pair $x,y\in X$ are said to be {\it strongly ordered}, denoted by $x\thickapprox y$, if $x-y\in \tint C$.
For sets $A,B$ we write $A\thicksim B$ if $x-y\in C$ for any $x\in A$ and $y\in B$.

A subset $W\subset X$ is {\it called ordered} if $x\thicksim y$ for any $x,y\in W$. $W$ is called unordered (also called strongly balanced in \cite{San09}) if $x\rightharpoondown y$ for any two disctinct $x,y\in W$.

A semiflow on $X$ is a continuous map $\Phi:\mathbb{R}^+\times X\to X$ which satisfies: (i) $\Phi_0=id_X$; (ii) $\Phi_t\circ\Phi_s=\Phi_{t+s}$ for $t,s\ge 0$. Here $\Phi_t(x)=\Phi(t,x)$ for $t\ge 0$ and $x\in X$ and $id_X$ is the identity map on $X$.

\begin{defn}\label{D:stongly-monotne}
 A continuous semi-flow $\Phi_t$ is called {\it monotone with respect to a $k$-solid cone $C$} if
$$\Phi_t(x)\thicksim\Phi_t(y),\, \text{ whenever } x\thicksim y \text{ and } t\ge 0;$$ and $\Phi_t$ is called {\it strongly monotone with respect to $C$} if $\Phi_t$ is monotone with respect to $C$ and
$$\Phi_t(x)\approx \Phi_t(y),\, \text{ whenever } x\ne y, x\thicksim y \text{ and } t>0.$$
\end{defn}

Let $x\in X$,  the {\it positive semi-orbit} of $x$ is denoted by $O^{+}(x)=\{\Phi_t(x):t\ge 0\}$.
A {\it negative semi-orbit} (resp. {\it full-orbit}) of $x$ is a continuous function $\psi:\mathbb{R}^-=\{t\in \mathbb{R}|t\le 0\}\to X$ (resp. $\psi:\mathbb{R}\to X$) such that $\psi(0)=x$ and, for any $s\le 0$ (resp. $s\in \mathbb{R}$), $\Phi_t(\psi(s))=\psi(t+s)$ holds for $0\le t\le -s$ (resp. $0\le t$). Clearly, if $\psi$ is a negative semi-orbit of $x$, then $\psi$ can be extended to a full orbit
\begin{equation}\label{E:full-orbit}
\tilde{\psi}(t)=\left\{\begin{split} & \psi(t), \,\,\,\,\,t\le 0,\\ &\Phi_t(x),\,\, t\ge 0.\end{split}\right.
\end{equation}
On the other hand, any full orbit of $x$ when restricted to $\mathbb{R}^-$ is a  negative semi-orbit of $x$. Since $\Phi_t$ is just a semiflow, a negative semi-orbit of $x$ may not exist, and it is not necessary to be unique even if one exists. Hereafter, we denote by $O^{-}_b(x)$ (resp. $O_b(x)$) a
negative semi-orbit (resp. full-orbit) of $x$. In particular, we write them as $O^{-}(x)$ (resp. $O(x)$) if such a negative semi-orbit (resp. full-orbit) is unique.

\indent {\it An equilibrium} is a point $x$ for which $O^{+}(x)=\{x\}$ (also called {\it a trivial semi-orbit}). Let $E$ be the set of all the equilibria of $\Phi_t$. A nontrivial $O^{+}(x)$ is said to be a {\it$T$-periodic orbit} for some $T>0$ if $\Phi_T(x)=x$.  A subset $S\subset X$ is called {\it positively invariant} if $\Phi_{t}(S)\subseteq S$ for any $t\geq 0$, and is called {\it invariant} if $\Phi_{t}(S)=S$ for any $t\geq 0$. Clearly, for any $x\in S$, there exists a negative semi-orbit of $x$ provided that $S$ is invariant.

The {\it$\omega$-limit set} $\omega(x)$ of $x\in X$ is defined by $\omega(x)=\cap_{s\ge 0}\overline{\cup_{t\ge s}\Phi_t(x)}$. If $O^{+}(x)$ is precompact (i.e., $\overline{O^{+}(x)}$ is compact), then $\omega(x)$ is nonempty, compact, connected and invariant. Hence, any $z\in \omega(x)$ admits in $\o(x)$ a negative semi-orbit, as well as a full-orbit.

Given a negative semi-orbit $O^{-}_b(x)$ of $x$, if $O^{-}_b(x)$ is precompact, then the {\it $\alpha$-limit set} $\a_b(x)$ of $O^{-}_b(x)$ is defined by $\alpha_{b}(x)=\{\cap_{s\ge 0}\overline{\cup_{t\ge s}\psi(-t)}\}$. We write $\a(x)$ as the $\alpha$-limit set of $O^{-}(x)$ if $x$ admits a unique negative semi-orbit.

A point $x\in X$  is called {\it non-wandering} if, for any neighborhood $U$ of $x$ and time $T>0$,  there is a $y\in U$ and a $t>T$ such that $\Phi_{t}(y)\in U$. Let $S\subset X$ be an invariant subset and $x_{1},x_{2}\in S$. For $\varepsilon,r>0$, a finite sequence $\{x_{1}=y_{1},y_{2}, \ldots, y_{n+1}=x_{2};\,t_{1},t_{2},\ldots,t_{n}\}$ of points $y_{i}\in S$ and times $t_{i}\geq r$ is called {\it $(\varepsilon,r)$-chain from $x_{1}$ to $x_{2}$ in $S$} if $\Phi_{t_{i}}(y_{i})\in B_{\varepsilon}(y_{i+1})$, $i=1,2\ldots,n$. Here $B_{\varepsilon}(y_{i+1})=\{z\in X:\norm{z-y_{i+1}}< \e\}$. A point $x\in S$ is called {\it chain-recurrent in $S$} if there is an $(\varepsilon,r)$-chain from $x$ to itself in $S$ for any $\varepsilon,r>0$. A subset $M\subset S$ is said to be chain recurrent if each point in $M$ is chain-recurrent in $S$. It is easy to see that the $\o$-limit set $\omega(x)$ of a precompact positive semi-orbit $O^{+}(x)$ is always chain-recurrent in both $\omega(x)$ and $X$ (See \cite[Appendix. Chain Recurrence]{Smi95}).

An invariant set $S\subset X$ is called {\it minimal} if it does not contain any proper invariant subset. Let $S\subset X$ be a minimal set. Then any $x\in S$ satisfies the following property: for any neighborhood $U$ of $x$, the set $N(x,U)=\{t>0: \Phi_{t}(x)\in U \}$ is relatively dense in $\R^+$, which means that there exists a positive number $l>0$, depending on $x$ and $U$, such that $N(x,U)\cap [t,t+l]\ne \emptyset$ for any $t\geq0 $.

\vskip 2mm

\begin{defn}\label{D:two-type-orbit}
A nontrivial semi-orbit $O^{+}(x)$ is called {\it pseudo-ordered} (also called {\it of Type-I}), if there exist two distinct point $\Phi_{t_1}(x),\Phi_{t_2}(x)$ in $O^{+}(x)$ such that $\Phi_{t_1}(x)\thicksim\Phi_{t_2}(x)$. Otherwise,
 $O^{+}(x)$ is called {\it unordered} (also called {\it of Type-II}), that is, any two distinct point $\Phi_{t_1}(x),\Phi_{t_2}(x)$ satisfies $\Phi_{t_1}(x)\rightharpoondown\Phi_{t_2}(x)$.\\
\end{defn}
\begin{defn}
Let $S\subset X$ be an invariant set with respect to $\{\Phi_t\}_{t\ge 0}$. $\Phi_t$ is said to {\it admit a flow extension} on $S$, if there is a flow $\{\tilde{\Phi}_t\}_{t\in \R}$ such that $\tilde{\Phi}_t(x)=\Phi_t(x)$ for any $x\in S$ and $t\ge 0.$
\end{defn}

If $S$ is locally compact, then $S$ admits a flow extension if and only if the negative semi-orbit (and hence the full-orbit) of any $x\in S$ is unique (see \cite[p.26, Theorem 2.3]{SY}).

\vskip 3mm

 Throughout this paper, we always impose the following assumptions:
\begin{itemize}
\item[{\bf (A1)}] $\Phi_t$ is a continuous semiflow which is strongly monotone with respect to a $k$-solid cone $C$.

\item[{\bf (A2)}] Any nonempty $\o$-limit set for $\Phi_t$ admits a flow extension.
\end{itemize}

\vskip 2mm

Now we are ready to present the main results of this paper. Among them, Theorems A-C mainly focus on the pseudo-ordered (Type-I) positive semi-orbits; while Theorem D concerns with the unordered (Type-II) positive semi-orbits. The proof of these theorems will be provided in the remaining sections.

\vskip 3mm
\noindent \textbf{Theorem A. } {\it Assume {\rm (A1)-(A2)} hold. Let $O^{+}(x)$ be a nontrivial pseudo-ordered precompact semi-orbit. Then the closure of any full-orbit in $\omega(x)$ is ordered.}
\vskip 2mm

 \begin{rmk}\label{Remark-on-ThmA}  \textnormal{ For $C^1$-smooth flows on $X=\mathbb{R}^n$, Theorem A was proved in \cite{San09} by using the Closing Lemma (see, e.g. \cite{Ar}). However, the Closing Lemma is still unknown in the infinite-dimensional spaces, and moreover; even in the finite dimensional case of $X=\mathbb{R}^n$, the Closing Lemma still cannot work because there is no smoothness assumption on $\Phi_t$ in our setting. Consequently, Theorem A is new even for the finite-dimensional case, which also gives an affirmative answer to the question posed in \cite[Remark 3]{San09}.}
 \end{rmk}

\vskip 3mm
\begin{rmk} \label{Remark-on-ThmB}  \textnormal{ As for the ordering property of $\omega(x)$ itself, it remains unknown. The following Theorem (Theorem B) will provide a trichotomy for the ordering property of $\omega(x)$. An immediate consequence of such trichotomy is that if $\omega(x)$ does not contain any equilibrium, then $\omega(x)$ itself is ordered. This partially solves the problem posed in \cite[Line 15-16, p.1984]{San09}.}
\end{rmk}

\vskip 3mm
\noindent \textbf{Theorem B.} {\it Assume {\rm (A1)-(A2)} hold. Let $O^{+}(x)$ be a nontrivial pseudo-ordered precompact semi-orbit. Then one of the following three alternatives must hold:
\begin{itemize}
\item[{\rm (i)}] $\omega(x)$ is ordered;

\item[{\rm (ii)}] $\omega(x)$ is unordered, and moreover, $\omega(x)\subset E$;

\item[{\rm (iii)}]  $\omega(x)$ possesses an ordered homoclinic property, i.e., there is an ordered and invariant subset $\tilde{B}\subsetneq\Omega$ such that, for any $p\in \Omega\setminus \tilde{B}$, it holds that
    $$\alpha(p)\cup\omega(p)\subset \tilde{B}\,\, \text{ and }\,\,\alpha(p)\subset E.$$\\
\end{itemize}

\vskip -10mm
In particular, if $\omega(x)\cap E=\emptyset$, then $\omega(x)$ itself is ordered. Moreover, if $C$ is complemented, then $\omega(x)$ is topologically conjugate to
a compact invariant set of a Lipschitz-continuous vector field in $\mathbb{R}^k$.
}

\vskip 2mm
Based on Theorem B, we can obtain the following

\vskip 3mm
\noindent \textbf{Theorem C.} {\it Assume {\rm (A1)-(A2)} hold. Assume that $k=2$ and $C$ is complemented. Let $O^{+}(x)$ be a nontrivial pseudo-ordered precompact semi-orbit. If $\omega(x)\cap E=\emptyset$, then $\omega(x)$ is a periodic orbit.}

\vskip 2mm
\begin{rmk}\label{Remark-on-ThmC}  \textnormal{When $\dim X<\infty$ ($X=\mathbb{R}^n$), Theorem C was proved by Sanchez \cite[Theorem 1]{San09}, which we now {\it refer to the Poincar\'{e}-Bendixson Theorem for $\Phi_t$.} The crucial tools of the proof in \cite{San09} are the generalized Perron-Frobenius Theorem (see [22]) and theory of invariant manifolds in $\mathbb{R}^n$, which strongly depend on the $C^1$-smoothness assumption of $\Phi_t$. Here, proved the Poincar\'{e}-Bendixson Theorem for $\Phi_t$ on a Banach space $X$ without the smoothness assumption.}
\end{rmk}

\vskip 3mm
\noindent \textbf{Theorem D.} {\it Assume {\rm (A1)-(A2)} hold. Let $O^{+}(x)$ be a nontrivial unordered precompact semi-orbit for $\Phi_t$. Then $\omega(x)$ itself is either an equilibrium or unordered.}
\vskip 3mm

\begin{rmk} \label{Remark-on-ThmD}  \textnormal{Finally, it deserves to point out that we here only require the flow extension on the $\o$-limit sets (see (A2)). In a semiflow which is generated by a differential equation such as parabolic equation or a certain delayed equation (see, e.g. \cite[Part III]{SY}), the semiflow itself does not admit a flow extension in general, but an $\o$-limit set does. Further applications of the theory developed in these theorems will be provided in a forthcoming paper.}
\end{rmk}

\section{Proof of Theorem A}

We first need the following Proposition:

\begin{prop}\label{P:non-wondering}
If x is a non-wandering point and there is some $T>0$ such that $x\sim\Phi_{T}(x)$ with $x \neq\Phi_{T}(x)$, then $\overline{O^{+}(x)}$ is ordered.
\end{prop}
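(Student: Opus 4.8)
The plan is to reduce the orderedness of $\overline{O^{+}(x)}$ to a single ``order with the base point'' statement and then to use the non-wandering hypothesis to overcome the failure of transitivity. First I would record what strong monotonicity gives for free: since $x\thicksim\Phi_T(x)$ with $x\neq\Phi_T(x)$, applying $\Phi_t$ for $t>0$ yields $\Phi_{nT}(x)\approx\Phi_{(n+1)T}(x)$ for every $n\ge 1$, so consecutive iterates under the time-$T$ map are strongly ordered. The target I would isolate is the claim \textbf{(A):} $x\thicksim\Phi_s(x)$ for every $s\ge 0$. Granting (A), the Proposition follows quickly: for $p=\Phi_{s_1}(x)$ and $q=\Phi_{s_2}(x)$ with $s_1\le s_2$ we have $x\thicksim\Phi_{s_2-s_1}(x)$ by (A), and applying the monotone map $\Phi_{s_1}$ gives $\Phi_{s_1}(x)\thicksim\Phi_{s_2}(x)$, i.e.\ $p\thicksim q$; since $C$ is closed, orderedness then passes to the closure $\overline{O^{+}(x)}$.

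The heart of the matter, and the step I expect to be the main obstacle, is (A). The naive approach---chaining $x\thicksim\Phi_s(x)$ and $\Phi_s(x)\thicksim\Phi_{s+s'}(x)$ to conclude $x\thicksim\Phi_{s+s'}(x)$---is unavailable, because a $k$-cone with $k\ge 2$ is not convex, so $\thicksim$ is not transitive and a sum of elements of $\tint C$ need not lie in $C$. This is precisely the gap that the non-wandering property must fill. I would argue (A) by contradiction: if $x\rightharpoondown\Phi_{s_0}(x)$ for some $s_0>0$, then, because the complement of $C$ is open (so unorderedness is a stable relation), there are neighborhoods of $x$ and of $\Phi_{s_0}(x)$ any points of which remain unordered. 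Using the strong ordering $\Phi_T(x)\approx\Phi_{2T}(x)$---passing to $\Phi_\varepsilon(x)$ for small $\varepsilon>0$ so as to work with a genuinely open relation near the base point, and letting $\varepsilon\to 0$ at the end via closedness of $C$---together with the openness of $\tint C$, I would produce a neighborhood $N$ of $x$ on which the local strong-order relation $y\approx\Phi_T(y)$ persists. The non-wandering hypothesis then supplies $y\in N$ and arbitrarily large return times $t$ with $\Phi_t(y)\in N$.

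The crux is to transport the order relations around such an approximate return loop. Along the finite segment from $y$ to $\Phi_t(y)$, the consecutive time-$T$ steps are strongly ordered, both endpoints lie near $x$, and the intermediate point $\Phi_{s_0}(y)$ is, by stability of unorderedness, unordered with $y$. I would use monotonicity of $\Phi_t$ to compare the returned piece of the orbit with the original segment and, sharpening the return approximations while invoking precompactness of the orbit (which holds in the intended application, since $x$ lies in an $\omega$-limit set) and the closedness of $C$, force the unordered pair to become ordered in the limit---contradicting the stable unorderedness established above. Hence (A) holds and the Proposition follows. The essential point to get right is that the entire argument must be topological, resting on the openness of $\tint C$ and of $X\setminus C$, the closedness of $C$, and the genuine recurrence furnished by non-wandering, rather than on any additive manipulation; this is exactly what replaces the $C^1$-Closing Lemma used by Sanchez in the smooth, finite-dimensional setting.
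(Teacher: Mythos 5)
Your reduction to the base-point claim (A), and your diagnosis that non-transitivity of $\thicksim$ is the obstacle that non-wandering must overcome, both match the paper. But the crux step of your argument --- ``force the unordered pair to become ordered in the limit'' --- is not merely unexecuted; as organized, it cannot work, and this is a genuine gap. The only order-propagation mechanism available is this: if $x \thicksim \Phi_{t^*}(x)$ with $x \neq \Phi_{t^*}(x)$, then strong monotonicity and continuity give neighborhoods $U \ni x$, $V \ni \Phi_{t^*}(x)$ and $\tau>0$ with $\Phi_t(U) \thicksim \Phi_t(V)$ for all $t \ge \tau$; feeding the non-wandering returns $y_n \to x$, $\Phi_{\tau_n}(y_n) \to x$ into this relation (with the second slot occupied by $\Phi_{t^*+s}(y_n) \in V$ for $|s|\le l$, via continuity) and letting $n \to \infty$ yields $x \thicksim \Phi_{t^*+s}(x)$ for $|s| \le l$. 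Note what this does and does not give: it extends order from a time separation $t^*$ at which order is \emph{already known} to nearby separations only, because monotonicity preserves time separations along an orbit. There is therefore no one-step way to manufacture an ordered pair at an arbitrary unordered separation $s_0$ far from $T$, which is what your contradiction requires; indeed, unorderedness propagates backward in time under monotonicity and is stable under perturbation, so it is perfectly consistent with everything you can write down along the return loop.

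What is missing is the paper's localization of the contradiction at the boundary of the set of ordered times: define $T_1 = \sup\{t \ge T : x \thicksim \Phi_s(x) \text{ for all } s \in [T,t]\}$. If $T_1 < \infty$, closedness of $C$ gives $x \thicksim \Phi_{T_1}(x)$; the periodic case $x = \Phi_{T_1}(x)$ is handled separately (as in Sanchez), and otherwise the extension mechanism above, applied at $t^* = T_1$, produces order on $[T_1-l, T_1+l]$, hence on $[T, T_1+l]$, contradicting the supremum; a symmetric argument shows $T_2 = \inf\{t>0 : x \thicksim \Phi_t(x)\} = 0$, covering small times. In short, the proof needs the open-and-closed (connectedness) structure on $\{s : x \thicksim \Phi_s(x)\}$, with the contradiction placed exactly where order and its failure meet, not at an arbitrarily chosen unordered time. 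Two smaller points: the proposition assumes no precompactness of $O^+(x)$ (which you invoke), and the paper's proof does not need it; and non-wandering supplies, for each $T'$, some $y \in N$ and some single $t > T'$ with $\Phi_t(y) \in N$ --- not one $y$ with arbitrarily large return times.
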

\begin{proof}
 Define $T_{1}=\sup\{t\geq T:x\thicksim\Phi_s(x)$ for all $s\in[T,t]\}$. Suppose that $T_{1}<+\infty$. Then
 $x\thicksim \Phi_{T_{1}}(x)$. If $x=\Phi_{T_{1}}(x)$, we have done by using the same argument in \cite[Proposition 2]{San09}. If $x\ne\Phi_{T_{1}}(x)$, then fix a $\tau>0$, it follows from the strongly monotone property and the continuity of $\Phi_\tau$ that one can find  a neighborhood $U$ of $x$ and a neighborhood $V$ of $\Phi_{T_{1}}(x)$ such that $\Phi_\tau(U)\thicksim\Phi_\tau(V)$, and hence $\Phi_t(U)\thicksim\Phi_t(V)$ for any $t\ge \tau$.

Choose an $\varepsilon>0$ so small that $B_{\varepsilon}(\Phi_{T_{1}}(x))\subset V$, where $B_{\varepsilon}(x)$ is the ball centered at $x\in X$ with the radius $\varepsilon>0$. For such $\varepsilon>0$, choose some $\delta_1>0$ such that $\Phi_{T_1}(B_{\delta_1}(x))\subset B_{\varepsilon/3}(\Phi_{T_{1}}(x))$, and hence, there exists some $l>0$ such that $\Phi_{T_{1}+s}(x)\in B_{\varepsilon/3}(\Phi_{T_{1}}(x))$ for any $s\in[-l,l]$. So, for every $s\in [-l,l]$, one can find some $\delta(s)\in (0,\delta_1)$ such that $\Phi_{T_{1}+s}(B_{\delta(s)}(x))\subset B_{\varepsilon}(\Phi_{T_{1}}(x))$. Moreover, it follows from the compactness of $[-l,l]$ that $\inf_{s\in [-l,l]}\delta(s)=\delta>0$. As a consequence,
\begin{equation}\label{s-contain}
\Phi_{T_{1}+s}(B_{\delta}(x))\subset B_{\varepsilon}(\Phi_{T_{1}}(x))
\, \text{ for all }\,s\in [-l,l].
\end{equation}
By choosing such $\delta>0$ small, if necessary, we assume that $B_\delta(x)\subset U$. Recall that  $B_{\varepsilon}(\Phi_{T_{1}}(x))\subset V$. Then
\begin{equation}\label{delta-T1-order}
\Phi_t(B_\delta(x))\thicksim\Phi_t(B_{\varepsilon}(\Phi_{T_{1}}(x)))\, \text{ for all }\,t\geq \tau.
\end{equation}
Since $x$ is a non-wondering point, there are two sequences $y_n\to x$ and $\tau_n\to \infty$ such that
$\Phi_{\tau_n}(y_n)\to x$ as $n\to \infty$. Therefore, one may assume without loss of generality that $y_n\in B_\delta(x)$ and $\tau_n>\tau$ for any $n\ge 1$. By virtue of \eqref{s-contain}, it entails that
\begin{equation*}\label{y-n-in}
\Phi_{T_{1}+s}(y_n)\subset B_{\varepsilon}(\Phi_{T_{1}}(x)),
\, \text{ for all }\,s\in [-l,l]\text{ and }n\ge 1.
\end{equation*}
Together with \eqref{delta-T1-order}, this implies that
\begin{equation*}
\Phi_{\tau_n}(y_n)\thicksim\Phi_{\tau_n}(\Phi_{T_{1}+s}(y_n)),
\, \text{ for all }\,s\in [-l,l]\text{ and }n\ge 1.
\end{equation*}
By letting $n\to \infty$, it yields that $x\sim \Phi_{T_{1}+s}(x)$ for all $s\in [-l,l]$, which contradicts the definition of $T_{1}$. Thus, $T_{1}=+\infty$.

Similarly, we define $T_{2}=\inf\{t>0:x\thicksim\Phi_t(x)\}$. By following the same argument for proving $T_{1}=+\infty$, one can obtain that $T_{2}=0$. Thus, we have proved that $O^{+}(x)$ is ordered. Moreover, by the closedness of $C$, we can further obtain that  $\overline{O^{+}(x)}$ is ordered.
\end{proof}

\begin{rmk}\label{R:stronly-com}
\textnormal{ By (A1), a simple fact can be deduced from the above proof is that: If $x\thicksim \Phi_T(x)$ with $x\neq\Phi_T(x)$, then for any fixed $\tau>0$, there exists a neighborhood $U$ of $x$ and a neighborhoods $V$ of $\Phi_{T}(x)$ such that $\Phi_t(U)\thicksim\Phi_t(V)$ for all $t\geq \tau$.}
\end{rmk}
\vskip 2mm

Now we are ready to prove Thoerem A.
\vskip 2mm

{\it Proof of Theorem A.} By virtue of (A2), any $p\in \o(x)$ admits a unique full-orbit $O(p)\subset \o(x)$. Without loss of generality, we assume that $p$ is not an equilibrium.

Denote by $\psi:\R\to \o(x)$ the full-orbit with $\psi(0)=p$ and $\psi(t)=\Phi_t(p)$ for $t\ge 0$. So
$\psi(s)$ is not an equilibrium for each $s\in \R$.  we now {\it assert that there exists some $T>0$ such that $\psi(s) \neq \Phi_{T}(\psi(s))$ and $\psi(s) \thicksim\Phi_{T}(\psi(s))$}. In fact, since $O^+(x)$ is nontrivial and pseudo-ordered, there exist $t_2>t_1\ge 0$ such that $\Phi_{t_1}(x)\thicksim \Phi_{t_2}(x)$ and $\Phi_{t_1}(x)\neq\Phi_{t_2}(x)$. Choose a sequence $\tau_k\rightarrow +\infty$ such that $\Phi_{\tau_{k}}(\Phi_{t_1}(x))\rightarrow \psi(s)$. Then $\Phi_{\tau_{k}}(\Phi_{t_2}(x))\rightarrow \psi(s+t_2-t_1)$. If $\psi(s)\neq \psi(s+t_2-t_1)$, then let $T=t_1-t_2$, and hence,  $\psi(s)\thicksim \psi(s+T)$ with $\psi(s)\neq\psi(s+T)$. Thus, we have done.
If $\psi(s)=\psi(s+t_2-t_1)$, then fix some $T_{1}>0$ and choose a neighborhood $U$ (resp. $V$) of $\Phi_{T_{1}}(\Phi_{t_1}(x))$ (resp. $\Phi_{T_{1}}(\Phi_{t_2}(x))$) such that $\Phi_t(U)\thicksim\Phi_t(V)$ for $t\geq 0$. By the continuity of $\Phi_t$, we take $0<\delta<T_{1}$ such that $\{\Phi_{T_{1}+\t}(\Phi_{t_1}(x)):\t\in[-\delta,\delta]\}\subset U$ and $\{\Phi_{T_{1}+\t}(\Phi_{t_2}(x)):\t\in[-\delta,\delta]\}\subset V$. Therefore,
$\Phi_{\tau_{k}+s_{1}}(\Phi_{t_1}(x))\thicksim\Phi_{\tau_{k}+s_{2}}(\Phi_{t_2}(x))$ for all $s_{1},s_{2}\in[-\delta,\delta]$ and $k$ sufficiently large. Together with $\psi(s)=\psi(s+t_2-t_1)$, this implies that the set $\{\psi(s+\t): \t\in[-\delta,\delta]\}$ is ordered. Recalling that $\psi(s)$ is not an equilibrium, we can find some $T\in [0,\delta]$ such that $\psi(s) \neq \Phi_{T}(\psi(s))$ and $\psi(s) \thicksim\Phi_{T}(\psi(s))$. Thus, we have proved the assertion.

Recall that any point in $\o(x)$ is non-wandering. Together with the assertion, Proposition \ref{P:non-wondering} implies that $\overline{O^+(\psi(s))}$ is ordered for each $s\in \R$.

For any $z,w\in \overline{O(p)}$, we only consider the case that there exist $s_n,t_n\to -\infty$ (as $n\to \infty$) such that $\psi(s_n)\to z$ and $\psi(t_n)\to w$. Other cases are similar. Without loss of generality, we assume that $s_n<t_n$ for each $n$. So $\psi(t_n)\in O^+(\psi(s_n))$, and hence, one has $\psi(t_n)\thicksim \psi(s_n)$ for each $n$. Because of the closeness of $C$, it yields that $z\thicksim w$. Thus, we have obtained $\overline{O(p)}$ is ordered.
$\qquad\qquad\qquad\qquad\qquad\qquad\qquad\qquad\qquad\square$

 \section{Proofs of Theorems B and D}

In this section, we focus on the ordering property of the $\omega(x)$ itself under the fundamental assumptions (A1)-(A2). Theorems B reveals the information of $\o(x)$ with nontrivial pseudo-ordered semi-orbits; while Theorem D concerns with nontrivial unordered semi-orbits.

We first consider the omega limit sets of nontrivial pseudo-ordered precompact semi-orbits. By virtue of Theorem A,  the closure of any full-orbit in $\omega(x)$ is ordered. As a consequence, one may assume without loss of generality that any full-orbit $O(a)$  in $\omega(x)$ satisfies
\begin{equation}\label{funda-assum}
\begin{split}
&\textnormal{{\bf (F)}}\quad\quad\quad\quad \quad\quad\quad \overline{O(a)}\subsetneq\omega(x),\,\, \textnormal{ for any } a\in \omega(x).\quad\quad\quad\quad \quad\quad\quad \
\end{split}
\end{equation}

By virtue of (A2), the negative semi-orbit (hence the full-orbit) of any $a\in \o(x)$ is unique. For the sake of convenience, we hereafter write the unique negative semi-orbit of $a\in \o(x)$ as $\{\Phi_{-s}(a)|s\ge 0\}$ satisfying
\begin{equation}\label{E:full-orbit-of-a}
\Phi_t(\Phi_{-s}(a))=\Phi_{t-s}(a),\,\,\,\text{ for any }t,s\ge 0.
\end{equation}

In order to prove Theorem B, we first present the following three technical lemmas.

\begin{lem}\label{cmpset-ordered}
 Let two compact sets $K_{1},K_{2}$ satisfy $K_{1}\cap K_{2}=\emptyset$ and $K_{1}\thicksim K_{2}$. Then there are open sets $U\supset K_{1}, V\supset K_{2}$ and a $T>0$ such that $\Phi_t(U)\thicksim \Phi_t(V)$ for all $t>T$.
\end{lem}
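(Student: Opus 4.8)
The plan is to reduce the set-level statement to a pointwise one using strong monotonicity, and then to patch the resulting pointwise neighborhoods together by a two-stage compactness (tube-lemma) argument.

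First I would fix some $\tau>0$ once and for all and take an arbitrary pair $(x,y)\in K_1\times K_2$. Since $K_1\cap K_2=\emptyset$ forces $x\neq y$, while $K_1\thicksim K_2$ gives $x\thicksim y$, strong monotonicity in (A1) yields $\Phi_\tau(x)\approx\Phi_\tau(y)$, i.e. $\Phi_\tau(x)-\Phi_\tau(y)\in\tint C$. Because $\tint C$ is open and the map $(u,v)\mapsto\Phi_\tau(u)-\Phi_\tau(v)$ is continuous, I would obtain open neighborhoods $U_{x,y}\ni x$ and $V_{x,y}\ni y$ with $\Phi_\tau(u)-\Phi_\tau(v)\in\tint C\subset C$ for all $(u,v)\in U_{x,y}\times V_{x,y}$, that is $\Phi_\tau(U_{x,y})\thicksim\Phi_\tau(V_{x,y})$. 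It is worth stressing that strong monotonicity is exactly what powers this step: plain monotonicity would only place the difference in the closed set $C$, from which no open product neighborhood could be extracted, and note also that no convexity of $C$ is needed here.

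Next I would globalize in two stages. Fixing $x\in K_1$, the family $\{V_{x,y}\}_{y\in K_2}$ is an open cover of the compact set $K_2$, so I extract a finite subcover $V_{x,y_1},\dots,V_{x,y_m}$ and set $V_x=\bigcup_j V_{x,y_j}\supset K_2$ and $U_x=\bigcap_j U_{x,y_j}\ni x$; if $u\in U_x$ and $v\in V_x$, then $v\in V_{x,y_j}$ for some $j$ and $u\in U_{x,y_j}$, so $\Phi_\tau(U_x)\thicksim\Phi_\tau(V_x)$. Now $\{U_x\}_{x\in K_1}$ covers the compact $K_1$, so a finite subcover $U_{x_1},\dots,U_{x_p}$ gives $U=\bigcup_i U_{x_i}\supset K_1$ and $V=\bigcap_i V_{x_i}\supset K_2$, both open; for any $u\in U,\ v\in V$ one has $u\in U_{x_i}$ and $v\in V_{x_i}$ for some $i$, whence $\Phi_\tau(u)\thicksim\Phi_\tau(v)$, i.e. $\Phi_\tau(U)\thicksim\Phi_\tau(V)$.

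Finally I would push this forward in time: for any $t>\tau$, writing $\Phi_t=\Phi_{t-\tau}\circ\Phi_\tau$ and applying the monotonicity of $\Phi_{t-\tau}$ to the ordered pair $(\Phi_\tau(u),\Phi_\tau(v))$ gives $\Phi_t(u)\thicksim\Phi_t(v)$, so $\Phi_t(U)\thicksim\Phi_t(V)$ for all $t>T:=\tau$, which is the claim. I expect the only delicate point to be the bookkeeping in the two compactness extractions — taking unions on the $K_i$ side and intersections on the neighborhood side, in the correct order, so that a single pair $(U,V)$ serves every admissible $(u,v)$ simultaneously; everything else is a direct consequence of strong monotonicity and continuity.
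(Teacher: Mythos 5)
Your proof is correct and follows essentially the same route as the paper: a pointwise application of strong monotonicity plus continuity (which is exactly the content of the paper's Remark \ref{R:stronly-com}), followed by the same two-stage finite-subcover argument with unions on the covered set and intersections on the opposite neighborhoods. The only cosmetic differences are that you fix a single uniform time $\tau$ at the outset and propagate it by monotonicity, whereas the paper carries pair-dependent times $T_{x,y}$ and takes maxima, and that you cover $K_2$ first rather than $K_1$.
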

\begin{proof}
Since $K_{1}\cap K_{2}=\emptyset$, any $x\in K_{1}$ and $y\in K_{2}$ satisfy $x\neq y$. By Remark \ref{R:stronly-com}, there exist open sets $U_{x,y}\ni x$, $V_{x,y}\ni y$ and time $T_{x,y}>0$ such that $\Phi_t(U_{x,y})\thicksim \Phi_t(V_{x,y})$ for all $t>T_{x,y}$.

Note that the family $\{U_{x,y}\}_{x\in K_{1}}$ of open sets covers of $K_{1}$. Then we may choose a finite subcover $U_y:=\cup_{i=1}^{m}U_{x_{i},y}$ of $K_1$. Denote $V_{y}=\bigcap_{i=1}^{m}V_{x_{i},y}$ and $T_{y}=\max\{T_{x_{i},y}:i=1,2\ldots m\}$, then $\Phi_t(U_{y})\thicksim \Phi_t(V_{y})$ for all $t\geq T_{y}$.

Moreover, since $\{V_{y}\}_{y\in K_{2}}$ is an open cover of $K_{2}$, so one may choose a finite subcover $V=\bigcup_{j=1}^{n}V_{y_{j}}$ of $K_2$. Denote $U=\bigcap_{j=1}^{n}U_{y_{j}}$ and $T=\max\{T_{y_{j}}:j=1,2\ldots n\}$, then $\Phi_t(U)\thicksim \Phi_t(V)$ for all $t\geq T$.
\end{proof}

\begin{lem}\label{parti-ordered}
Let $A \subset \omega(x)$ be an invariant compact set. If there exists some $a\in \omega(x)\setminus A$ with $a\thicksim A$, then one has $A\thicksim \omega(x)$.
\end{lem}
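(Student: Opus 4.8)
The plan is to avoid any attempt to propagate the ordering \emph{backward} along orbits (which would fail, for the reason noted below) and instead to push a whole neighbourhood of $a$ \emph{forward}, catching the semi-orbit $O^{+}(x)$ as it drifts toward an arbitrary target point $w\in\omega(x)$. The only real input is Lemma \ref{cmpset-ordered} together with the invariance of $A$.

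First I would record that $a\notin A$ and $A$ is compact, hence closed, so $\{a\}$ and $A$ are disjoint compact sets with $\{a\}\thicksim A$. Applying Lemma \ref{cmpset-ordered} to $K_{1}=\{a\}$ and $K_{2}=A$ produces open sets $U\ni a$, $V\supset A$ and a time $T>0$ such that $\Phi_t(U)\thicksim\Phi_t(V)$ for all $t>T$. Since $A$ is invariant, $A=\Phi_t(A)\subset\Phi_t(V)$, and therefore
\begin{equation*}
\Phi_t(U)\thicksim A\quad\text{for all }t>T.
\end{equation*}
Next, because $a\in\omega(x)$, the semi-orbit $O^{+}(x)$ accumulates at $a$, so I can fix a single time $t_{0}\ge 0$ with $\Phi_{t_{0}}(x)\in U$. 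Now let $w\in\omega(x)$ be arbitrary and choose $s_{j}\to+\infty$ with $\Phi_{s_{j}}(x)\to w$. For all large $j$ we have $s_{j}-t_{0}>T$, whence the semigroup property gives
\begin{equation*}
\Phi_{s_{j}}(x)=\Phi_{s_{j}-t_{0}}\bigl(\Phi_{t_{0}}(x)\bigr)\in\Phi_{s_{j}-t_{0}}(U),
\end{equation*}
and the displayed ordering above yields $\Phi_{s_{j}}(x)\thicksim A$. Letting $j\to\infty$ and using the closedness of $C$ (so that the limit point inherits the order relation), I obtain $w\thicksim A$. As $w\in\omega(x)$ was arbitrary, this proves $A\thicksim\omega(x)$.

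The step that looks like the main obstacle is conceptual rather than computational. One is tempted to set $B=\{w\in\omega(x):A\thicksim w\}$, observe that it is closed and positively invariant (the latter from monotonicity together with $\Phi_t(A)=A$), and then try to force $B=\omega(x)$. The difficulty is that $B$ need not be negatively invariant: for a mere semiflow an order relation cannot be pulled back through $\Phi_t$, since monotonicity constrains only the images of already-ordered pairs and says nothing about preimages. The device that circumvents this is exactly the forward shift used above: both $a$ and the target $w$ are limit points of the \emph{same} forward orbit $O^{+}(x)$, so time never has to be run backward. It suffices to let that orbit first pass through the neighbourhood $U$ of $a$, where the forward ordering with $A$ holds for all subsequent times by Lemma \ref{cmpset-ordered} and the invariance of $A$, and only afterward approach $w$.
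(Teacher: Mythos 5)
Your proof is correct and follows essentially the same route as the paper's: apply Lemma \ref{cmpset-ordered} to $\{a\}$ and $A$, use invariance of $A$ to get $\Phi_t(U)\thicksim A$ for large $t$, let the forward orbit $O^{+}(x)$ enter $U$, and conclude that the orbit's tail (hence, by closedness of $C$, all of $\omega(x)$) is ordered with $A$. Your explicit limiting step and the closing remarks on why backward propagation is avoided only make explicit what the paper leaves implicit in "which directly implies that $A\thicksim\omega(x)$."
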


\begin{proof}
By Lemma \ref{cmpset-ordered}, there exist open sets $U\ni a$,$V\supseteq A$ and $T_{0}>0$ such that $\Phi_t(U)\thicksim \Phi_t(V)$ for all $t\geq T_{0}$. Since A is invariant, one has $\Phi_t(U)\thicksim A$ for all $t\geq T_{0}$. Noticing $a\in\omega(x)$,  one can find a $T_{1}>0$ such that $\Phi_{T_1}(x)\in U$. So we obtain $\Phi_{T_{1}+t}(x)\thicksim A$ for $t>T_{0}$, which directly implies that $A\thicksim \omega(x)$.
\end{proof}

\begin{lem}\label{co-limit}
 \begin{description}
\item{{\rm (i)}} If $a_{1}\thicksim a_{2}$ and there is a sequence $\tau_{k}\to \infty$ such that $\Phi_{\tau_k}(a_{1})\rightarrow c$ and $\Phi_{\tau_k}(a_{2})\rightarrow c$, then either $c\in E$ or $\overline{O(c)}$ is ordered.

\item{{\rm (ii)}} Let $a_1,a_2\in \o(x).$ If $a_1\rightharpoondown a_{2}$ and there is a sequence $\tau_{k}\to \infty$ such that $\Phi_{-\tau_k}(a_{1})\rightarrow c$ and $\Phi_{-\tau_k}(a_{2})\rightarrow c$, then either $c\in E$ or $\{\Phi_t(c)\mid t\in [-\d,\d]\}$ unordered for some $\d>0$.
    \end{description}
\end{lem}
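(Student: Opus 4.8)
We treat the two parts as duals of one another: part (i) concerns the common forward limit $c$ of an ordered pair and will exploit that the relation $\thicksim$ is closed and forward--invariant, while part (ii) concerns the common backward limit of an unordered pair, where the complementary relation $\rightharpoondown$ is open (hence \emph{not} closed) but order is still preserved forward. In both parts we may assume $c\notin E$, since otherwise the conclusion is immediate; in part (i) we may also assume $a_1\ne a_2$ (if $a_1=a_2$ the hypothesis carries no information), while in part (ii) the relation $a_1\rightharpoondown a_2$ already forces $a_1\ne a_2$. Throughout, $c$ lies in a precompact $\omega$-- or $\alpha$--limit set contained in $\omega(x)$, hence is non-wandering and admits, by (A2), a unique two-sided orbit, so that $\Phi_\theta(c)$ is defined for all $\theta\in\R$.

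For (i), the plan is to manufacture a nontrivial \emph{ordered arc} through $c$ and then invoke Proposition \ref{P:non-wondering}. Fix any $T_1>0$. Since $a_1\thicksim a_2$ with $a_1\ne a_2$, strong monotonicity (A1) gives $\Phi_{T_1}(a_1)-\Phi_{T_1}(a_2)\in\tint C$; as $\tint C$ is open I may pick neighborhoods $U\ni\Phi_{T_1}(a_1)$ and $V\ni\Phi_{T_1}(a_2)$ with $U\thicksim V$, and monotonicity then yields $\Phi_t(U)\thicksim\Phi_t(V)$ for all $t\ge0$ (this is the mechanism of Remark \ref{R:stronly-com} and Lemma \ref{cmpset-ordered}). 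By continuity I choose $\delta\in(0,T_1)$ so that $\Phi_{T_1+\theta}(a_1)\in U$ and $\Phi_{T_1+\theta}(a_2)\in V$ for $\theta\in[-\delta,\delta]$. Applying $\Phi_{\tau_k-T_1}$ (legitimate for large $k$), using $\Phi_{\tau_k-T_1}\bigl(\Phi_{T_1+\theta}(a_i)\bigr)=\Phi_{\tau_k+\theta}(a_i)$, gives $\Phi_{\tau_k+\theta_1}(a_1)\thicksim\Phi_{\tau_k+\theta_2}(a_2)$ for all $\theta_1,\theta_2\in[-\delta,\delta]$. Letting $k\to\infty$ along a subsequence and using the closedness of $C$ together with the uniqueness of the full orbit of $c$, I obtain $\Phi_{\theta_1}(c)\thicksim\Phi_{\theta_2}(c)$ for all $\theta_1,\theta_2\in[-\delta,\delta]$, i.e.\ the arc $\{\Phi_\theta(c):\theta\in[-\delta,\delta]\}$ is ordered. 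As $c\notin E$, there is $T\in(0,\delta]$ with $\Phi_T(c)\ne c$ and $c\thicksim\Phi_T(c)$; since $c$ is non-wandering, Proposition \ref{P:non-wondering} gives that $\overline{O^+(c)}$ is ordered, and the argument in the final paragraph of the proof of Theorem A extends this to $\overline{O(c)}$.

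For (ii), the analogous neighborhood construction would transport \emph{unorderedness} backward, but here lies the main obstacle: $X\setminus C$ is open and not closed, so a limit of unordered pairs may fail to be unordered, and I cannot push $\rightharpoondown$ onto $c$ directly. I therefore argue by contradiction. If the conclusion fails, then for every $\delta>0$ there exist distinct $t_1,t_2\in[-\delta,\delta]$ with $\Phi_{t_1}(c)\thicksim\Phi_{t_2}(c)$, and for small $\delta$ these two points are distinct because $c\notin E$ makes the orbit locally injective. Thus the orbit of $c$ carries an ordered distinct pair, and since $c\in\alpha(a_1)\subset\omega(x)$ is non-wandering, Proposition \ref{P:non-wondering} forces $\overline{O(c)}$ to be \emph{entirely} ordered. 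I then propagate this order \emph{forward}: for distinct $t_1,t_2\ge0$ with $\Phi_{t_1}(c)\ne\Phi_{t_2}(c)$, Lemma \ref{cmpset-ordered} furnishes neighborhoods $U'\ni\Phi_{t_1}(c)$, $V'\ni\Phi_{t_2}(c)$ and $T'>0$ with $\Phi_t(U')\thicksim\Phi_t(V')$ for $t\ge T'$; since $\Phi_{-\tau_k+t_1}(a_1)\to\Phi_{t_1}(c)$ and $\Phi_{-\tau_k+t_2}(a_2)\to\Phi_{t_2}(c)$, applying $\Phi_{\tau_k-t_2}$ for large $k$ yields $\Phi_{t_1-t_2}(a_1)\thicksim a_2$. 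Letting $t_1\to t_2$, so that $\sigma:=t_1-t_2\to0$ and $\Phi_\sigma(a_1)\to a_1$, the closedness of $C$ gives $a_1\thicksim a_2$, contradicting $a_1\rightharpoondown a_2$. Hence some arc $\{\Phi_t(c):t\in[-\delta,\delta]\}$ is unordered.

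The delicate point is precisely the asymmetry of the two relations. Because $\thicksim$ is closed and forward--invariant, in (i) the ordered arc survives the passage to the limit and the proof is essentially constructive. In (ii) unorderedness is only an open condition, destroyed by limits, which is why that part must be routed through a contradiction that first upgrades a hypothetical \emph{local} ordering of the orbit of $c$ to a \emph{global} one via Proposition \ref{P:non-wondering}, and then transports it forward onto $a_1,a_2$ to violate $a_1\rightharpoondown a_2$. The remaining bookkeeping—verifying the flow identities such as $\Phi_{\tau_k-t_2}\bigl(\Phi_{-\tau_k+t_1}(a_1)\bigr)=\Phi_{t_1-t_2}(a_1)$ and confirming that $c$ possesses a genuine two-sided orbit—is routine and is guaranteed by (A2) and the precompactness of the ambient limit set.
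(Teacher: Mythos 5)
Your proposal is correct in substance, and it splits naturally: part (i) is essentially the paper's own argument, while part (ii) takes a genuinely different route. For (i), the paper builds exactly the same ordered arc $\{\Phi_\theta(c):\theta\in[-\delta,\delta]\}$ by the same neighborhood construction; it then finishes by observing that this makes $O^{+}(a_i)$ pseudo-ordered and quoting Theorem A, whereas you finish by applying Proposition \ref{P:non-wondering} at $c$ directly. Your ending is more self-contained (the paper's ``type-I'' step is left terse), but it carries an obligation the paper's does not: Proposition \ref{P:non-wondering} only yields that $\overline{O^{+}(c)}$ is ordered, and the final paragraph of the proof of Theorem A, which you cite to pass to $\overline{O(c)}$, needs as input that $\overline{O^{+}(\Phi_{-s}(c))}$ is ordered for \emph{every} $s\ge 0$, not only at $s=0$. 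This is fillable with your own tools: by precompactness and the (A2)-uniqueness of backward orbits one gets $\Phi_{\tau_k-s}(a_i)\to\Phi_{-s}(c)$ along a subsequence, so the arc construction and Proposition \ref{P:non-wondering} can be repeated at each $\Phi_{-s}(c)$, after which any two points of $\overline{O(c)}$ lie in a common ordered set $\overline{O^{+}(\Phi_{-s}(c))}$ and closedness of $C$ concludes. As written, though, this step is a gap; the paper avoids it because Theorem A already does that work at every point of the full orbit.

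For (ii), both proofs are contradictions, but they aim at different targets. The paper first propagates unorderedness \emph{backward}: it takes $U_1\rightharpoondown U_2$ around $a_1,a_2$, chooses $\delta$ so that the $[-\delta,\delta]$-arcs of $a_1,a_2$ lie in $U_1,U_2$, and concludes (contrapositive of monotonicity) that $\Phi_{-\tau_k+\sigma_1}(a_1)\rightharpoondown\Phi_{-\tau_k+\sigma_2}(a_2)$ for all $\sigma_1,\sigma_2\in[-\delta,\delta]$ and large $k$; a hypothetical ordered distinct pair on the limit arc then contradicts this directly via strongly monotone neighborhoods, with no limiting argument needed. You instead aim the contradiction at $a_1\rightharpoondown a_2$ itself: hypothetical ordered distinct pairs at vanishing time-separation are propagated forward by Lemma \ref{cmpset-ordered} to give $\Phi_\sigma(a_1)\thicksim a_2$ with $\sigma\to 0$, and closedness of $C$ finishes. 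Your route costs the extra limit and the local-injectivity observation, but that observation is in fact a virtue: the paper's asserted claim that $\Phi_{s_1}(c)\rightharpoondown\Phi_{s_2}(c)$ for all $s_1\ne s_2\in[-\delta,\delta]$ silently fails if $c$ happened to be periodic with minimal period at most $2\delta$ (equal points are ordered, and strong monotonicity cannot be invoked for them), a case your injectivity remark, together with shrinking $\delta$, handles explicitly. Both versions rest on the asymmetry you identify at the outset: order survives forward flow and limits, while unorderedness survives only backward flow.
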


\begin{proof}
(i) Fix $T_{1}>0$ and choose open sets $U\ni \Phi_{T_{1}}(a_{1})$, $V\ni \Phi_{T_{1}}(a_{2})$ and such that $\Phi_t(U)\thicksim\Phi_t(V)$ for $t\geq 0$. By continuity of $\Phi_t$, take $T_{1}>\delta>0$ such that $\{\Phi_{T_{1}+s}(a_{1}):s\in[-\delta,\delta]\}\subset U$ and $\{\Phi_{T_{1}+s}(a_{2}):s\in[-\delta,\delta]\}\subset V$. Then we have $\Phi_{\tau_{k}+s_{1}}(a_{1})\thicksim\Phi_{\tau_{k}+s_{2}}(a_{2})$ for $s_{1},s_{2}\in[-\delta,\delta]$ and all $k$ sufficiently large. Recall that $c\in \o(a_i), i=1,2$. Then (A2) implies that the full-orbit $O(c)$ of $c$ is well-defined and satisfies $O(c)\subset \o(a_i),i=1,2.$ So, by letting $k\to \infty$, we obtain that $\Phi_{s_{1}}(c)\thicksim\Phi_{s_{2}}(c)$ for
any $s_{1},s_{2}\in [-\delta,\delta]$. This implies that $O^+(a_{i}),i=1,2,$ are of type-I. Suppose that $c\notin E$. Then, by Theorem A and $\overline{O(c)}\subset\omega(a_{1})$, we directly obtain that $\overline{O(c)}$ is ordered.

 (ii) Since $a_1,a_2\in \o(x)$, $\overline{O(a_i)}, i=1,2,$ are well-defined.  Noticing that $a_{1}\rightharpoondown a_{2}$, there exist a neighborhood $U_1$ (resp. $U_2$) of $a_1$ (resp. $a_2$) such that $U_1 \rightharpoondown U_2$.  Take a $\delta>0$ such that $\{\Phi_s(\Phi_{-\delta}(a_{i})): s\in[0,2\delta]\}\subset U_i$ for $i=1,2$. As a consequence, $\Phi_{-\tau_{k}-\delta+s_{1}}(a_{1})\rightharpoondown\Phi_{-\tau_{k}-\delta+s_{2}}(a_{2})$ for any $s_{1},s_{2}\in[0,2\delta]$ and all $k>0$ sufficiently large. Suppose now that $c\notin E$. Then we assert that $\Phi_{s_{1}}(c)\rightharpoondown\Phi_{s_{2}}(c)$ for any $s_{1}\neq s_{2}\in [-\d,\delta]$. Otherwise, one can find some $s_1,s_2\in [-\d,\delta]$ such that $\Phi_{-\tau_{k}+s_{1}}(a_{1})\sim\Phi_{-\tau_{k}+s_{2}}(a_{2})$ for $k$ large enough, a contradiction.
\end{proof}

By the assumption {\bf (F)} in \eqref{funda-assum},  we may classify the closure $\overline{O(a)}$ of any given orbit
in $\omega(x)$ into the following two types:

${\rm (P1)}$: $\quad \overline{O(a)}\thicksim \omega(x)$;  otherwise,

${\rm (P2)}$:  $\quad $ there is some $z\in \omega(x)\setminus \overline{O(a)}$ such that $z\rightharpoondown y$ for some $y\in \overline{O(a)}$.

\vskip 2mm

\noindent By Lemma \ref{parti-ordered} and the invariance of $\overline{O(a)}$, it is easy to see that $\overline{O(a)}$ satisfies (P2) if and only if,
for any $z\in \omega(x)\setminus \overline{O(a)}$, there exists some $t_0\in \mathbb{R}$ such that $z\rightharpoondown \Phi_{t_0}a$.

Define $$B=\{\overline{O(b)}\subset \omega(x):\overline{O(b)}\textnormal{ satifies  (P1)} \},\,\, \textnormal{  }\,\, \,\,\tilde{B}=\bigcup_{\overline{O(b)}\in B}\overline{O(b)},$$ and
$$A=\{\overline{O(a)}\subset \omega(x):\overline{O(a)}\textnormal{ satifies  (P2)} \}\,\, \textnormal{ and  }\,\, \,\,\tilde{A}=\bigcup_{\overline{O(a)}\in A}\overline{O(a)}.$$
Clearly, $\tilde{A}\cup \tilde{B}=\omega(x)$, $A\cap B=\emptyset$ and $\tilde{B}\thicksim \omega(x)$ (hence $\tilde{B}$ is ordered). Moreover, we have further properties for these sets:

\begin{prop}\label{property-of-A-B}
 {\rm (i)} $\overline{O(a)}\in A$ if and only if $\tilde{B}\subsetneq\overline{O(a)}$. Moreover, if  $A\neq\emptyset$ then $\tilde{A}\cup \tilde{B}=\tilde{A}=\omega(x)$.

{\rm (ii)} Let $\overline{O(a_i)}\in A, i=1,2$ satisfy $\overline{O(a_1)}\ne \overline{O(a_2)}$. Then
$\overline{O(a_1)}\nsubseteq \overline{O(a_2)}$ and $\overline{O(a_2)}\nsubseteq \overline{O(a_1)}$. Moreover, $\overline{O(a_1)}\cap \overline{O(a_2)}\supseteq\tilde{B}.$
\end{prop}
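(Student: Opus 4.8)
The plan is to reduce everything to a single mechanism supplied by Lemma~\ref{parti-ordered}: if $M\subset\omega(x)$ is invariant and compact and some point of $\omega(x)\setminus M$ is ordered with $M$, then $M\thicksim\omega(x)$, i.e. $M$ satisfies (P1). First I would record the elementary equivalence that, for $p\in\omega(x)$, one has $p\in\tilde B$ if and only if $\overline{O(p)}$ satisfies (P1). Indeed, $\overline{O(p)}\in B$ forces $p\in\overline{O(p)}\subset\tilde B$, while conversely $p\in\overline{O(b)}$ with $\overline{O(b)}\in B$ gives $\overline{O(p)}\subset\overline{O(b)}$ and hence $\overline{O(p)}\thicksim\omega(x)$ too. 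Since $A\cap B=\emptyset$, this shows at once that no point lying on a (P2)-orbit belongs to $\tilde B$; in particular $a\in\overline{O(a)}\setminus\tilde B$ whenever $\overline{O(a)}\in A$, which will provide the strictness in part (i).

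The core step is the forward implication of (i): if $\overline{O(a)}\in A$ then $\tilde B\subset\overline{O(a)}$. I would argue by contradiction. Suppose some $b\in\tilde B$ satisfies $b\notin\overline{O(a)}$. Because $\tilde B\thicksim\omega(x)$, the point $b$ is ordered with every point of $\overline{O(a)}$, i.e. $b\thicksim\overline{O(a)}$. Applying Lemma~\ref{parti-ordered} to the invariant compact set $\overline{O(a)}$ together with the exterior point $b\in\omega(x)\setminus\overline{O(a)}$ yields $\overline{O(a)}\thicksim\omega(x)$, so $\overline{O(a)}$ satisfies (P1), contradicting $\overline{O(a)}\in A$. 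Hence $\tilde B\subset\overline{O(a)}$, and combined with $a\notin\tilde B$ we obtain $\tilde B\subsetneq\overline{O(a)}$. The converse is purely set-theoretic: if $\overline{O(a)}\notin A$ then $\overline{O(a)}\in B$ (the types (P1)/(P2) are exhaustive and exclusive once we know $\overline{O(a)}$ is ordered by Theorem~A), whence $\overline{O(a)}\subset\tilde B$; together with $\tilde B\subsetneq\overline{O(a)}$ this forces $\tilde B=\overline{O(a)}$, a contradiction. The ``moreover'' of (i) is then immediate: picking any $\overline{O(a)}\in A$ gives $\tilde B\subset\overline{O(a)}\subset\tilde A$, so $\tilde B\subset\tilde A$ and, since $\tilde A\cup\tilde B=\omega(x)$, also $\tilde A=\omega(x)$.

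For part (ii) I would invoke the same mechanism. Suppose, for distinct $\overline{O(a_1)},\overline{O(a_2)}\in A$, that $\overline{O(a_1)}\subset\overline{O(a_2)}$; then the inclusion is strict, so there is a point $q\in\overline{O(a_2)}\setminus\overline{O(a_1)}$. By Theorem~A the set $\overline{O(a_2)}$ is ordered, so $q$ is ordered with every point of the subset $\overline{O(a_1)}$, i.e. $q\thicksim\overline{O(a_1)}$. Applying Lemma~\ref{parti-ordered} to the invariant compact set $\overline{O(a_1)}$ and the exterior point $q$ gives $\overline{O(a_1)}\thicksim\omega(x)$, that is $\overline{O(a_1)}\in B$, contradicting $\overline{O(a_1)}\in A$. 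Thus $\overline{O(a_1)}\nsubseteq\overline{O(a_2)}$, and by symmetry $\overline{O(a_2)}\nsubseteq\overline{O(a_1)}$. Finally the containment $\overline{O(a_1)}\cap\overline{O(a_2)}\supseteq\tilde B$ drops out of part (i), since $\tilde B\subset\overline{O(a_i)}$ for $i=1,2$.

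The main obstacle, and the only genuinely new idea, is recognizing that Lemma~\ref{parti-ordered} is precisely the tool that upgrades a single exterior ordered companion of an invariant compact set to the global ordering (P1), and then producing the correct companion in each case: a point of $\tilde B$ lying outside $\overline{O(a)}$ for the forward half of (i), and a point of the strictly larger (and, crucially, ordered) orbit closure $\overline{O(a_2)}$ for (ii). Once this is set up, everything else is bookkeeping. The one soft point I would verify carefully is that the classification (P1)/(P2) is genuinely exhaustive for orbit closures in $\omega(x)$: this relies on $\overline{O(a)}$ being ordered (Theorem~A), so that the failure of (P1) automatically yields an \emph{exterior} unordered point as demanded by (P2).
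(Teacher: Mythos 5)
Your proof is correct and follows essentially the same route as the paper: both parts rest on invoking Lemma~\ref{parti-ordered} with a suitably chosen exterior ordered point (a point of $\tilde B$ outside $\overline{O(a)}$ for (i), a point of the ordered set $\overline{O(a_2)}\setminus\overline{O(a_1)}$ for (ii)), together with the exclusivity/exhaustiveness of (P1)/(P2) guaranteed by Theorem~A. The only cosmetic difference is how strictness in (i) is obtained: you show $a\notin\tilde B$ via your preliminary equivalence, while the paper notes that $\tilde B=\overline{O(a)}$ would force $\overline{O(a)}\thicksim\omega(x)$ and hence $\overline{O(a)}\in B$; both are equivalent uses of $\tilde B\thicksim\omega(x)$.
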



\begin{proof}
(i) If $B=\emptyset$, We have done. It only needs to consider the case that $B\neq\emptyset$.

{\it Necessity.} Let $\overline{O(a)}\in A$, we first show $\tilde{B}\subset\overline{O(a)}$. Suppose that there is some
$b\in \tilde{B}\setminus\overline{O(a)}$. Then $b\sim \omega(x)$, and hence $b\sim \overline{O(a)}$. It then follows from Lemma \ref{parti-ordered} that $\overline{O(a)}\sim \omega(x)$, which implies that $\overline{O(a)}\in B$, a contradiction to $A\cap B=\emptyset.$ So we have obtained that  $\tilde{B}\subset\overline{O(a)}$. Suppose that
$\tilde{B}=\overline{O(a)}$. Then $\overline{O(a)}(=\tilde{B})\sim \omega(x),$ and again, one has $\overline{O(a)}\in B$, a contradiction. Thus one obtains that $\tilde{B}\subsetneq\overline{O(a)}$.

{\it Sufficiency.} Suppose that $\overline{O(a)}\in B$. Then one has $\tilde{B}\subsetneq \overline{O(a)}
 \subset \tilde{B}$, a contradiction.  Moreover, if $A\neq\emptyset$, then $\tilde{B}\subsetneq \overline{O(a)}$ for some $\overline{O(a)}\in A$. Note that $\overline{O(a)}\subset \tilde{A}$. Then one has $\tilde{A}\cup \tilde{B}=\tilde{A}=\omega(x)$.

 (ii) Suppose that $\overline{O(a_{1})}\subseteq \overline{O(a_{2})}$. Then $\overline{O(a_{1})}\subsetneq\overline{O(a_{2})}$, because $\overline{O(a_{1})}\neq\overline{O(a_{2})}$.
 As a consequence, there exists $b\in\overline{O(a_{2})}$ such that $b\notin\overline{O(a_{1})}$. Recall that $\overline{O(a_{2})}$ is ordered. Then $b\thicksim\overline{O(a_{1})}$. By Lemma \ref{parti-ordered}, one has $\overline{O(a_{1})}\thicksim \omega(x)$, contradicting that $\overline{O(a_{1})}\in A$. Thus, we have proved that $\overline{O(a_1)}\nsubseteq \overline{O(a_2)}$.  Similarly, we can also obtain $\overline{O(a_2)}\nsubseteq \overline{O(a_1)}$.

 The fact $\overline{O(a_1)}\cap \overline{O(a_2)}\supseteq\tilde{B}$ is directly from (i).
\end{proof}

\begin{prop}\label{B-nonempty}
Let $O^{+}(x)$ be a nontrivial pseudo-ordered precompact semi-orbit and assume that {\bf {\rm (F)}} in \eqref{funda-assum} holds. Assume also that $B\ne \emptyset$. Then one of the following alternatives holds:

{\rm (i)} $\omega(x)$ is ordered; or otherwise,

{\rm (ii)} $A\ne \emptyset$ and $\omega(x)$ possesses the following ordered homoclinic property: Given any $\overline{O(a)}\in A$, it holds that $\alpha(a)\cup\omega(a)\subset \tilde{B}$; and moreover, $\alpha(a)\subset E$.
\end{prop}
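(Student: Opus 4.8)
The plan is to split on whether $A=\emptyset$. If $A=\emptyset$, then $\tilde A=\emptyset$, so $\omega(x)=\tilde A\cup\tilde B=\tilde B$; since $\tilde B\thicksim\omega(x)$ is ordered, alternative (i) holds at once. So I assume henceforth $A\ne\emptyset$ and fix an arbitrary $\overline{O(a)}\in A$, aiming at alternative (ii). First I would record two structural facts about such an $a$. It cannot be periodic: a periodic orbit admits no nonempty proper invariant subset, whereas Proposition \ref{property-of-A-B}(i) forces $\emptyset\ne\tilde B\subsetneq\overline{O(a)}$; for the same reason $a\notin\tilde B$ (otherwise $\overline{O(a)}\subset\tilde B\subsetneq\overline{O(a)}$) and $a$ is not an equilibrium. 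The goal is then $\alpha(a)\cup\omega(a)\subset\tilde B$ together with $\alpha(a)\subset E$.

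For the inclusion $\omega(a)\subset\tilde B$ I would proceed by contradiction: if some $q\in\omega(a)$ lay in $\tilde A$, then $\overline{O(q)}\in A$ and $\overline{O(q)}\subset\omega(a)\subset\overline{O(a)}$, so Proposition \ref{property-of-A-B}(ii) would force $\overline{O(q)}=\overline{O(a)}$ and hence $\omega(a)=\overline{O(a)}$; that is, $a\in\omega(a)$ and $O^{+}(a)$ is dense in $\overline{O(a)}$. The identical reduction applied to $\alpha(a)$ shows that $\alpha(a)\subset\tilde B$ can fail only if $\alpha(a)=\overline{O(a)}$. Thus both inclusions reduce to excluding the recurrent situation in which the orbit closure coincides with its own $\omega$- (resp. $\alpha$-) limit set.

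Excluding this recurrence is the main obstacle, and I expect it to be the genuinely hard step. It cannot be reached by a naive monotone-convergence argument: since $C=-C$ is symmetric, the relation $\thicksim$ is not antisymmetric, and no Lyapunov functional is available to forbid return even though strong monotonicity makes the orbit ``strictly increasing'' in $\tint C$. Instead I would exploit the defining feature of membership in $A$: there is an exterior witness $z\in\omega(x)\setminus\overline{O(a)}$ with $z\rightharpoondown\Phi_{t_0}(a)$, and since $\tilde B\thicksim\omega(x)$ this point necessarily satisfies $\Phi_{t_0}(a)\notin\tilde B$, i.e. it lies in the strictly larger part $\overline{O(a)}\setminus\tilde B$. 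Using the openness of the unordered relation (Remark \ref{R:stronly-com} and Lemma \ref{cmpset-ordered}), the density of $O^{+}(a)$ in $\overline{O(a)}$, and the non-wandering/chain-recurrent character of $\omega(x)$, I would manufacture from $z$ an unordered pair whose forward (resp. backward) iterates share a common limit, and then invoke Lemma \ref{co-limit} (together with Proposition \ref{P:non-wondering}) to contradict the fact that $\overline{O(a)}$ is ordered. This delivers $a\notin\omega(a)$ and $a\notin\alpha(a)$, completing $\alpha(a)\cup\omega(a)\subset\tilde B$.

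Finally, for $\alpha(a)\subset E$, I would use that $\alpha(a)\subset\tilde B$ is already established, so every $c\in\alpha(a)$ satisfies $c\thicksim\omega(x)$; in particular no local arc through $c$ can be unordered. I would then realise each such $c$ as the common $\alpha$-limit, along a single sequence $\tau_k\to\infty$, of an unordered pair $a_1\rightharpoondown a_2$ supplied again by the exterior witness $z$ and the backward dynamics on $\omega(x)$. Lemma \ref{co-limit}(ii) then leaves only the two possibilities $c\in E$ or local unorderedness of $\{\Phi_t(c):t\in[-\delta,\delta]\}$; the latter is incompatible with $c\in\tilde B$, so $c\in E$. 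This gives $\alpha(a)\subset E$ and therefore alternative (ii). The recurring technical difficulty throughout is that every equilibrium-type conclusion must be extracted from Lemma \ref{co-limit} by first producing an \emph{unordered} pair with a prescribed common limit, rather than from any order-monotone convergence; arranging these pairs from the single exterior witness is where the argument is delicate.
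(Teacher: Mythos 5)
Your reduction is sound and actually tracks the hidden core of the paper's argument: splitting on $A=\emptyset$, and using Proposition \ref{property-of-A-B}(ii) to show that failure of $\omega(a)\subset\tilde B$ (resp. $\alpha(a)\subset\tilde B$) forces the recurrent identity $\omega(a)=\overline{O(a)}$ (resp. $\alpha(a)=\overline{O(a)}$). (One small imprecision: $q\in\tilde A$ does not imply $\overline{O(q)}\in A$, since $\tilde B\subset\tilde A$ whenever $A\ne\emptyset$; what you need, and what is true, is that $q\notin\tilde B$ implies $\overline{O(q)}\in A$.) The genuine gap is exactly where you flag it: the step ``manufacture from $z$ an unordered pair whose forward (resp.\ backward) iterates share a common limit'' is never carried out, and it cannot be reached with the tools you cite. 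For an unordered pair $a_1\rightharpoondown a_2$ in $\omega(x)$, backward iterates $\Phi_{-\tau_k}(a_1)$, $\Phi_{-\tau_k}(a_2)$ along a common sequence have, a priori, \emph{distinct} limits $c_1\ne c_2$; ruling that case out is the whole difficulty, and Lemma \ref{co-limit}, Proposition \ref{P:non-wondering}, Remark \ref{R:stronly-com} and chain recurrence give no handle on it. There is also a problem with the logical shape of your intended contradiction: Lemma \ref{co-limit}(ii) concludes ``$c\in E$ \emph{or} local unorderedness,'' and since local unorderedness is impossible inside $\omega(x)$ (Theorem A), its only effective output is $c\in E$ --- which by itself does not contradict orderedness of $\overline{O(a)}$ unless you separately know $c\notin E$; you never arrange that.

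The missing exclusion is precisely the ``claim'' in the paper's proof ($c_1\ne c_2$ cannot happen), and it consumes most of the work there. Its proof rests on Lemma \ref{undd-1}, which your proposal never invokes: when $c_1\ne c_2$, that lemma upgrades unorderedness of the pair to \emph{full-orbit} unorderedness ($c_1\rightharpoondown O(c_2)$, etc.); one then shows $c_1\ne c_2$ forces $\overline{O(a_i)}=\alpha(a_i)$ (your recurrent case), extracts a new backward-limit pair $(a_1,b)$ with $a_1\rightharpoondown O(b)$ and $\overline{O(b)}\in A$, uses $B\ne\emptyset$ to produce an ordered point $d\in\tilde B\subsetneq\overline{O(b)}$ with $d\thicksim a_1$, and propagates this order by strong monotonicity ($\Phi_{T_1-\tau_k}(a_1)\thicksim\Phi_{T_1+s_l}(b)$, then apply $\Phi_{\tau_k-T_1}$) to contradict $a_1\rightharpoondown O(b)$. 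So the hypothesis $B\ne\emptyset$ enters the recurrence exclusion in an essential way, not merely as bookkeeping. Without Lemma \ref{undd-1} (or an equivalent full-orbit-unorderedness mechanism) and the ordered witness $d\in\tilde B$, your plan stalls exactly at the step you call delicate; the remainder of your outline (deducing $\alpha(a)\subset E$ from Lemma \ref{co-limit}(ii) once common backward limits are available) is consistent with the paper's own use of that lemma.
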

\begin{proof}
\indent  If $A=\emptyset$ then $\omega(x)$ is clearly ordered. Suppose that the cardinality of the set $A$ is equal to $1$, then it follows from Propsition \ref{property-of-A-B}(i) that $\omega(x)=\tilde{A}=\overline{O(a)}$ for some $a\in \omega(x)$, which contradicts (F).

 So, it suffices to consider the case that the cardinality of $A$ is at least $2$. Hence, for any $\overline{O(a_{1})}\subset A$, there exists $\overline{O(a_{2})}\subset A$ such that
$\overline{O(a_{1})}\ne \overline{O(a_{2})}$. Moreover, one may assume without loss of generality that $a_1\rightharpoondown a_2$. Indeed, it follows from the argument following the definition of (P2) that there is a $y\in \overline{O(a_{2})}\setminus \overline{O(a_{1})}$ such that $y\rightharpoondown \Phi_{t_0}(a_1)$ for some $t_0 \in\R$. Moreover, we can find $t_1\in R$ such that $\Phi_{t_1}(a_2)\rightharpoondown \Phi_{t_0}(a_1)$ since $y\in \overline{O(a_2)}$. Note that $\overline{O(a_{1})}=\overline{O(\Phi_{t_0}(a_1))}$ and  $\overline{O(a_{2})}=\overline{O(\Phi_{t_1}(a_2))}$. Thus we may assume that $a_1\rightharpoondown a_2$.

Choose any sequence $t_k\to \infty$ such that $\Phi_{-t_k}(a_{i})\rightarrow c_{i}$, for $i=1,2.$ Then either (i) $c_{1}=c_{2}$; or otherwise, (ii) $c_{1}\ne c_{2}$.

{\it We claim that case {\rm (ii)} cannot happen.}
Before proving this claim, we first show how it implies our conclusion. In fact, for any $c\in \alpha(a_{1})$, there is a sequence $t_k\to \infty$ such that $\Phi_{-t_{k}}(a_{1})\rightarrow c$ as $t_k\to \infty$.
By virtue of the claim, one can choose a subsequence of $\{t_k\}$, still denoted by $\{t_k\}$, such that  $\Phi_{-t_{k}}(a_{2})\rightarrow c$ as $t_k\to \infty$. Consequently, $c\in \alpha(a_{2})$, and hence, $\alpha(a_{1})\subset \alpha(a_2)$. Similarly, one can get $\alpha(a_{2})\subset \alpha(a_1)$. Thus,  $\alpha(a_{1})=\alpha(a_2)$. By Lemma \ref{co-limit}(ii), we can further obtain that $\alpha(a_{1})=\alpha(a_2)\subset E$ (Otherwise, choose $z\in \alpha(a_{1})\setminus E$ and some $s_{k}\rightarrow +\infty$ such that $\Phi_{-s_{k}}(a_{1})\rightarrow z\in \a(a_1)=\a(a_2)\subset \o(x)$.  Again, by the claim, one can assume that $\Phi_{-s_{k}}(a_{2})\rightarrow z$.  Since (A2) holds, $\overline{O(z)}$ is well-defined. So Lemma \ref{co-limit}(ii) implies that $\overline{O(z)}$ is locally non-ordered. On the other hand, noticing $\overline{O(z)}\subset \omega(x)$, it follows from Theorem A that $\overline{O(z)}$ is ordered, a contradiction.)

Since $\alpha(a_{1})=\alpha(a_2)$, one has $\alpha(a_{1})\subsetneq \overline{O(a_{1})}$ (For otherwise, $\overline{O(a_{1})}=\alpha(a_{1})=\alpha(a_{2})\subseteq \overline{O(a_{2})}$, which contradicts Proposition \ref{property-of-A-B}(ii)).  So, by Lemma \ref{parti-ordered}, we obtain that $\alpha(a_{1})\thicksim \omega(x)$. As a consequence, $\alpha(a_{1})(=\alpha(a_2))\subset\tilde{B}\cap E$.

We now prove $\omega(a_1)\subset\tilde{B}$. Since $\alpha(a_2)(=\alpha(a_1))\subset\tilde{B}$ and $\overline{O(a_{1})}\ne \overline{O(a_{2})}$, one has $a_1\thicksim \alpha(a_2)$ and $a_1\notin \alpha(a_2)$. By Lemma \ref{cmpset-ordered}, one can find an open set $U\supset \alpha(a_2)$ and time $T>0$ such that $\Phi_t(U)\thicksim \Phi_t(a_1)$ for $t\ge T$. Choose $\tau>0$ so large that $\Phi_{-t}(a_2)\in U$ for any $t\ge \tau$. Then $\Phi_{-t}(a_2) \thicksim \Phi_{T}(a_1)$ for all $t\geqq \tau-T $. So, by the monotonicity of $\Phi_t$, we have  $\Phi_t(a_1)\thicksim a_2$ for all $t\geq \tau$. This implies that $\omega(a_1)\thicksim a_2$. However, note that $a_2 \notin \omega(a_1)$ (otherwise $\overline{O(a_{2})}\subseteq\omega(a_1)\subseteq \overline{O(a_{1})}$, a contradiction to Proposition \ref{property-of-A-B}(ii)). Again, by Lemma \ref{parti-ordered}, one has $\omega(a_1)\thicksim \omega(x)$, which implies that $\omega(a_1)\subset \tilde{B}$. Thus, we have obtain all the statements in Proposition \ref{B-nonempty}.

Finally, it suffices to prove the claim above.
To this end, suppose that $c_1\ne c_2$. Then we have $c_1\rightharpoondown c_2$, whose proof will be postponed to Lemma \ref{undd-1} below. So, for the full-orbit $\overline{O(a_i)}, i=1,2$, obtained above, it holds that $a_{i}\in\overline{O(c_{i})}$, $i=1,2$. (Otherwise, say $a_{1}\notin\overline{O(c_{1})}$. Recalling that
$\overline{O(c_{1})}\subset\overline{O(a_{1})}$ and $\overline{O(a_{1})}$ is ordered, we have  $a_{1}\sim\overline{O(c_{1})}$. Thus, Lemma \ref{parti-ordered} implies that $\omega(x)\sim \overline{O(c_{1})}$, a contradiction to  $c_1\rightharpoondown c_2$.) So, $\overline{O(a_i)}\subset \overline{O(c_i)}$. Note also that $c_i\in \a(a_i)$. Then we have  $\overline{O(a_{i})}=\a(a_{i})$ for $i=1,2$.

Now we can choose a sequence $\tau_k\to \infty$ such that
 $\Phi_{-\tau_{k}}(a_{1})\rightarrow a_{1}$ and  $\Phi_{-\tau_{k}}(a_{2})\rightarrow b$ as $\tau_{k}\rightarrow \infty$. Clearly,
 $b\neq a_{1}$ (Otherwise, $a_{1}\in \overline{O(a_{2})}$. By Theorem A, we know $a_{1}\thicksim a_{2}$, contracting to $a_{1}\rightharpoondown a_{2}$). Then, again by the forthcoming Lemma \ref{undd-1}, one obtains that $O(a_{1})\rightharpoondown b$ and $a_{1}\rightharpoondown O(b)$. Since $b\in \a(a_2)\subset \o(x),$ $\overline{O(b)}$ is well-defined. Obviously, $\overline{O(b)}\in A$. Now choose some $d\in \tilde{B}\subsetneq \overline{O(b)}\in A$ (because $B\neq\emptyset$). Clearly, $d\ne a_{1}$ and $d\thicksim a_{1}$. So, there exists a neighborhood $U$ (resp. $V$) of $a_{1}$ (resp. of $d$) and a $T_{1}> 0$ such that $\Phi_t(U)\sim \Phi_t(V)$ for $t\ge T_{1}$. Let $\{s_{l}\}_{l=1}^{\infty}\subset \mathbb{R}$ be a sequence so that
 $\Phi_{s_{l}}(b)\rightarrow d$. Then one has $\Phi_{T_1-\tau_k}(a_{1})\sim \Phi_{T_1+s_l}(b)$ for all $k,l$ sufficiently large.
 Now take $\tau_k$ so large that $T_{1}-\tau_k<0$, then it follows from the monotonicity of $\Phi_{t\ge0}$ that  $a_1\thicksim \Phi_{\tau_k+s_l}b$, which contradicts $a_{1}\rightharpoondown O(b)$. Thus, we have completed the proof of the claim.
\end{proof}

\begin{lem}\label{undd-1}
Let $a_1,a_2\in \o(x)$. Assume that $a_{1}\rightharpoondown a_{2}$ and $\Phi_{-t_{k}}(a_{i})\rightarrow b_{i}$ as $t_k\rightarrow +\infty$ for $i=1,2.$ If $b_{1}\neq b_{2}$, then $b_{1}\rightharpoondown b_{2}$. Furthermore, $b_{1}\rightharpoondown O(b_{2})$ and $O(b_{1})\rightharpoondown b_{2}$.
\end{lem}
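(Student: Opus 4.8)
The plan is to argue by contradiction, turning any order relation among the backward limits into an order relation between $a_1$ and $a_2$ by flowing forward along the \emph{common} time sequence $t_k$, thereby contradicting $a_1\rightharpoondown a_2$. Throughout I use that, by (A2), the points $a_1,a_2,b_1,b_2$ lie in $\omega(x)$ and carry unique full orbits, and that, by Theorem A, every orbit closure inside $\omega(x)$ is ordered. The latter supplies the distinctness needed below: once $b_1\rightharpoondown b_2$ is known, no point of $O(b_1)$ can coincide with $b_2$, since otherwise $b_1$ and $b_2$ would lie in a single ordered orbit closure and hence be ordered.

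To prove $b_1\rightharpoondown b_2$, suppose $b_1\thicksim b_2$. Since $b_1\neq b_2$, Lemma \ref{cmpset-ordered} applied to the singletons $\{b_1\},\{b_2\}$ provides neighborhoods $U\ni b_1$, $V\ni b_2$ and $T>0$ with $\Phi_t(U)\thicksim\Phi_t(V)$ for all $t>T$. For $k$ large we have $\Phi_{-t_k}(a_1)\in U$, $\Phi_{-t_k}(a_2)\in V$ and $t_k>T$, so applying $\Phi_{t_k}$ and using \eqref{E:full-orbit-of-a} gives $a_1=\Phi_{t_k}(\Phi_{-t_k}(a_1))\thicksim\Phi_{t_k}(\Phi_{-t_k}(a_2))=a_2$, a contradiction. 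The decisive feature is that $b_1$ and $b_2$ are reached from $a_1$ and $a_2$ at the \emph{same} backward time, so one forward map $\Phi_{t_k}$ restores both endpoints simultaneously.

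For the two remaining assertions I would absorb the time shift into the base point instead of the sequence. To get $O(b_1)\rightharpoondown b_2$, suppose $\Phi_{s_0}(b_1)\thicksim b_2$. Writing $\Phi_{s_0}(b_1)=\lim_k\Phi_{-t_k}(\Phi_{s_0}(a_1))$ and $b_2=\lim_k\Phi_{-t_k}(a_2)$, still along the common sequence $t_k$, the forward-flowing device of the previous paragraph applied to the pair $(\Phi_{s_0}(a_1),a_2)$ forces $\Phi_{s_0}(a_1)\thicksim a_2$; thus it suffices to show $O(a_1)\rightharpoondown a_2$. Symmetrically, absorbing the shift into $a_2$ reduces $b_1\rightharpoondown O(b_2)$ to $a_1\rightharpoondown O(a_2)$. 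To exclude $\Phi_{s_0}(a_1)\thicksim a_2$ for $s_0\neq 0$ I would perform a continuation in $s$ on the closed set $\{s:\Phi_s(a_1)\thicksim a_2\}$: its complement is open because $C$ is closed, $\overline{O(a_1)}$ is ordered by Theorem A, and every point of $\omega(x)$ is non-wandering, so the ordering can be transported across nearby times in the spirit of Proposition \ref{P:non-wondering} until it reaches $s=0$, yielding the contradiction $a_1\thicksim a_2$.

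This continuation is where I expect the real difficulty. Because $C$ has rank $k>1$ it is not convex and $\thicksim$ is \emph{not} transitive, so the residual shift $s_0$ cannot be eliminated by algebra among order relations: one is perpetually left with the innocuous pattern of two points each ordered with a third yet mutually unordered. The shift must instead be removed dynamically, by using the recurrence furnished by the non-wandering property together with the ordered structure of $\overline{O(a_1)}$ from Theorem A and the uniform neighborhoods produced by strong monotonicity; carrying out this transport rigorously is the technical heart of the argument.
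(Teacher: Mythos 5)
Your first paragraph (the claim $b_1\rightharpoondown b_2$) is correct and is essentially the paper's own argument: strong monotonicity (Remark \ref{R:stronly-com}, or Lemma \ref{cmpset-ordered} applied to singletons) gives neighborhoods $U\ni b_1$, $V\ni b_2$ with $\Phi_t(U)\thicksim\Phi_t(V)$ for large $t$, and flowing forward by the \emph{common} times $t_k$ forces $a_1\thicksim a_2$, a contradiction. The gap is in the two remaining assertions, which are the lemma's real content. You reduce $O(b_1)\rightharpoondown b_2$ to the statement $O(a_1)\rightharpoondown a_2$; the reduction step itself is a valid implication, but its target is strictly stronger than the hypothesis $a_1\rightharpoondown a_2$ and does not follow from the hypotheses at all. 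Monotonicity propagates \emph{order} forward in time, hence \emph{unorderedness} backward: the set of shifts $s$ with $\Phi_s(\Phi_{-t}(a_1))\rightharpoondown\Phi_{-t}(a_2)$ can only grow as $t$ increases, and at $t=0$ the hypotheses guarantee only that it contains a neighborhood of $s=0$. Concretely, whenever $\Gamma(a_1,a_2):=\sup\{t\ge 0:\Phi_s(a_1)\rightharpoondown a_2 \text{ for all } s\in[-t,t]\}$ is finite --- which the hypotheses permit, and which is the expected situation in the homoclinic configuration where the lemma is invoked in Proposition \ref{B-nonempty} --- closedness of $C$ forces $\Phi_{\Gamma(a_1,a_2)}(a_1)\thicksim a_2$ or $\Phi_{-\Gamma(a_1,a_2)}(a_1)\thicksim a_2$, so your target statement fails outright while the lemma's conclusion must still be proved. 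Your proposed repair, a continuation in $s$ pushing the closed set $\{s:\Phi_s(a_1)\thicksim a_2\}$ to $s=0$ via the non-wandering property ``in the spirit of Proposition \ref{P:non-wondering}'', has no mechanism behind it: that proposition transports order along a \emph{single} orbit ordered with its own time-translate, whereas here two distinct orbits are involved, and, as you yourself note, $\thicksim$ is not transitive. So the step you flag as the technical heart is not a technicality; it is a reduction to a false target.

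The paper's proof goes in the opposite direction: instead of pulling the conclusion back to time zero, it shows that the unordered window grows to infinity along the backward orbits. Setting $f(t)=\Gamma(\Phi_{-t}(a_1),\Phi_{-t}(a_2))$, monotonicity of $\Phi_t$ makes $f$ nondecreasing; a compactness argument (unorderedness is an open relation) shows $\sup_{t\ge 0}f(t)=\Gamma(b_1,b_2)$; and if $\Gamma(b_1,b_2)<\infty$, closedness of $C$ gives $\Phi_{\pm\Gamma(b_1,b_2)}(b_1)\thicksim b_2$, whereupon Remark \ref{R:stronly-com} produces, for all large $t$, an ordered pair $\Phi_{-t+\Gamma(b_1,b_2)-\sigma}(a_1)\thicksim\Phi_{-t}(a_2)$ at a shift lying strictly inside the window $f(t)$ --- a contradiction. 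This respects the asymmetry your reduction ignores: the conclusion for the limits $b_i$ is genuinely stronger than the hypothesis for the $a_i$, so it has to be extracted from the backward limiting process itself, not transported back to the points $a_i$.
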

\begin{proof}
By the monotonicity of $\Phi_t$, $\Phi_{-t_{k}}(a_{1})\rightharpoondown\Phi_{-t_{k}}(a_{2})$ for any $k>0$.
 Suppose that $b_{1}\thicksim b_{2}$. By Remark \ref{R:stronly-com}, there exists a neighborhood $U$ (resp. $V$) of $b_{1}$ (resp. $V$) and $T_{2}>0$ such that $\Phi_t(U)$$\thicksim$$\Phi_t(V)$ for $t\geq T_{2}$. Choose some $t_{k}>T_{2}$ such that $\Phi_{-t_{k}}(a_{1})\in U$ and $\Phi_{-t_{k}}(a_{2})\in V$. Then $\Phi_{T_{2}-t_{k}}(a_{1})\thicksim\Phi_{T_{2}-t_{k}}(a_{2})$, which implies that $a_{1}\thicksim a_{2}$, a contradiction. Thus, $b_{1}\rightharpoondown b_{2}$.

We now prove $O(b_{1})\rightharpoondown b_{2}$, the proof of $b_{1}\rightharpoondown O(b_{2})$ is similar. For any $c_1,c_2\in \o(x)$ with $c_1\rightharpoondown c_2$, define $$\Gamma(c_{1},c_{2})=\sup\{t\geq 0:\Phi_s(c_{1})\rightharpoondown c_{2}\,\textnormal{ for all }\,s\in[-t,t]\}.$$ Clearly, $\Gamma(c_{1},c_{2})> 0$. Define a positive function
$f(t)=\Gamma(\Phi_{-t}(a_{1}),\Phi_{-t}(a_{2}))$ for $t\ge 0$. By strongly monotone property of $\Phi_t$, it is easy to see that $f(t)$ is nondecreasing with respect to $t\ge 0$. Note also that $f(t_{k})\leq \Gamma(b_{1},b_{2})$ for all $t_k>0$. Then $\Gamma(b_{1},b_{2})$ is an upper bound of $f(t)$ on $[0,+\infty)$.

We assert that $\Gamma(b_{1},b_{2})=\sup_{t\ge 0} \{f(t)\}$. Let $\{l_n\}_{n=1}^{+\infty}$ be a increasing sequence with positive number such that $l_n\rightarrow \Gamma(b_{1},b_{2})$ as $n\rightarrow +\infty$. Clearly, the set
$L=\{\Phi_s(b_{1}):s\in [-l_n,l_n]\}$ is compact and $L\rightharpoondown b_2$.  Then there exists a neighborhood $U$ (resp. $V$) of $L$ (resp. $b_2$)  such that $U\rightharpoondown V$. Since $\Phi_{-t_{k}}(a_{2})\in V$ and $\Phi_s(\Phi_{-t_{k}}(a_{1}))\in U$ for all $s\in [-l_n,l_n]$ and all $k$ sufficiently large, one has $\Phi_{s-t_{k}}(a_{1})\rightharpoondown\Phi_{-t_{k}}(a_{2})$ for all $s\in  [-l_n,l_n]$ and $k$ sufficiently large. So, one can find some $\tau> 0$ such that $f(\tau)\ge  l_n$ for any $n\in N^+$. Thus, we have proved the assertion.

Finally, we prove that $\Gamma(b_{1},b_{2})= +\infty$ (and hence, $O(b_{1})\rightharpoondown b_{2}$). Suppose that
$\Gamma(b_{1},b_{2})<+\infty$. Then $\Phi_{-\Gamma(b_{1},b_{2})}(b_{1})\thicksim b_{2}$ or $\Phi_{\Gamma(b_{1},b_{2})}(b_{1})\thicksim b_2$.
 Without loss of the generality, we assume $\Phi_{\Gamma(b_{1},b_{2})}(b_{1})\thicksim b_{2}$. Then then exists a neighborhood $U$ (resp. $V$) of $\Phi_{\Gamma(b_{1},b_{2})}(b_{1})$ (resp. $b_{2}$) and time $T_{0}> 0$ such that $\Phi_t(U)\thicksim \Phi_t(V)$ for all $t\geq T_{0}$. Choose some $\sigma>0$ so small that $\Phi_s(b_{1})\in U$ whenever $\abs{s-\Gamma(b_{1},b_{2})}\le\sigma$. It then follows that
  \begin{equation}\label{a-1a-2-or}
 \Phi_{T_{0}+\Gamma(b_{1},b_{2})-\sigma-t_{k}}(a_{1})\thicksim \Phi_{T_{0}-t_{k}}(a_{2})
 \end{equation}
 for $k$ sufficiently large. On the other hand, we have already known that  $\Gamma(b_{1},b_{2})=\sup_{t\ge 0} \{f(t)\}$ by the assertion above. So, one can find a $\tau >0$ such that $f(t)\ge \Gamma(b_{1},b_{2})-\sigma$ for $t\geq \tau$, which implies that $\Phi_{-t+\Gamma(b_{1},b_{2})-\sigma}(a_{1})\rightharpoondown \Phi_{-t}(a_{2})$ for all $t\geq \tau$, a contradiction to \eqref{a-1a-2-or}. Thus, we have proved $\Gamma(b_{1},b_{2})= +\infty$, which completes the proof of the lemma.
\end{proof}


\begin{lem}\label{order-sqeu-1}
Let $O^{+}(x)$ be a nontrivial pseudo-ordered precompact semi-orbit. If there exist $a,b\in\omega(x)$ satisfying $a\thicksim b$ and $\overline{O(a)}\cap \overline{O(b)}=\emptyset$, then $B\ne\emptyset$.
  \end{lem}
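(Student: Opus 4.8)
The plan is to manufacture, out of the single ordered pair $a\thicksim b$, one full-orbit closure inside $\omega(x)$ that is ordered with the \emph{whole} of $\omega(x)$; by definition this closure then lies in $B$. The engine for the last step is Lemma~\ref{parti-ordered}: if I can exhibit an invariant compact set $S\subset\omega(x)$ together with a point $q\in\omega(x)\setminus S$ satisfying $q\thicksim S$, then $S\thicksim\omega(x)$, i.e. $S$ is of type (P1), whence $B\neq\emptyset$. So the whole problem reduces to producing such a pair $(q,S)$.

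The genuine difficulty, and the place I expect to spend the most care, is that monotonicity only delivers \emph{same-time} comparisons: from $a\thicksim b$ one gets $\Phi_t(a)\thicksim\Phi_t(b)$ for every $t\ge0$, but not the off-diagonal statement $q\thicksim\Phi_r(v)$ with $q$ fixed and $r$ sliding, which is exactly what Lemma~\ref{parti-ordered} needs. To convert diagonal ordering into off-diagonal ordering I would use recurrence, but I need $a$ and $b$ to recur \emph{simultaneously}. I obtain this by passing to the product flow. By (A2) the map $\Psi_t:=\Phi_t\times\Phi_t$ is a genuine flow on $\omega(x)\times\omega(x)$; let $N:=\omega_\Psi(a,b)$ be its $\omega$-limit set. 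Then $N$ is nonempty, compact and $\Psi$-invariant, its first and second projections lie in $\omega(a)\subset\overline{O(a)}$ and $\omega(b)\subset\overline{O(b)}$ respectively, and since $\Phi_t(a)\thicksim\Phi_t(b)$ for all $t\ge0$, closedness of $C$ gives $u\thicksim v$ for every $(u,v)\in N$. Choosing a minimal subset $\tilde M\subset N$ for $\Psi$ and a point $(u_0,v_0)\in\tilde M$, minimality forces $(u_0,v_0)$ to be (uniformly) recurrent, so there are times $s_j\to\infty$ with $\Phi_{s_j}(u_0)\to u_0$ and $\Phi_{s_j}(v_0)\to v_0$ \emph{at once}.

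With simultaneous recurrence in hand I would run a continuation argument to upgrade $u_0\thicksim v_0$ to $u_0\thicksim\Phi_r(v_0)$ for all $r\in\R$. Set $I=\{r\in\R:u_0\thicksim\Phi_r(v_0)\}$; it contains $0$ and is closed, by closedness of $C$ and continuity of the flow. For openness, fix $r_0\in I$; since $u_0\in\overline{O(a)}$ and $\Phi_{r_0}(v_0)\in\overline{O(b)}$ are distinct (the two closures are disjoint), Remark~\ref{R:stronly-com} furnishes neighborhoods $U\ni u_0$, $W\ni\Phi_{r_0}(v_0)$ and $\tau>0$ with $\Phi_t(U)\thicksim\Phi_t(W)$ for all $t\ge\tau$. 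For $|r|$ small one has $\Phi_{r_0+r}(v_0)\in W$; evaluating at $t=s_j\ge\tau$ gives $\Phi_{s_j}(u_0)\thicksim\Phi_{s_j}(\Phi_{r_0+r}(v_0))=\Phi_{r_0+r}(\Phi_{s_j}(v_0))$, and letting $j\to\infty$ (using $\Phi_{s_j}(u_0)\to u_0$, $\Phi_{s_j}(v_0)\to v_0$ and closedness of $C$) yields $u_0\thicksim\Phi_{r_0+r}(v_0)$. Hence $I$ is open; being clopen and nonempty in $\R$, it is all of $\R$, so $u_0\thicksim\overline{O(v_0)}$.

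Finally I close the reduction. The set $S:=\overline{O(v_0)}$ is invariant and compact, and $\overline{O(v_0)}\subset\overline{O(b)}$ is disjoint from $\overline{O(a)}\ni u_0$, so $u_0\in\omega(x)\setminus S$ and $u_0\thicksim S$. Lemma~\ref{parti-ordered} then gives $\overline{O(v_0)}\thicksim\omega(x)$, i.e. $\overline{O(v_0)}$ is of type (P1), and therefore $B\neq\emptyset$. The one step that is not routine is the passage to the product flow and the extraction of a minimal set delivering the simultaneous recurrence of $u_0$ and $v_0$: this is precisely what defeats the same-time limitation of monotone comparison, and it is unavailable working with $\Phi_t$ on a single orbit alone.
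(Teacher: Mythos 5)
Your proof is correct, and it shares both its opening move and its endgame with the paper's own argument, while the engine in the middle is genuinely different. Like the paper, you start from a simultaneous limit along the two orbits --- the paper chooses $t_k\to\infty$ with $\Phi_{t_k}(a)\to z_1$ and, after extracting a subsequence, $\Phi_{t_k}(b)\to z_2$, which is exactly a point of your product-flow limit set $N$ --- and, like the paper, you finish by handing Lemma~\ref{parti-ordered} a point of $\omega(x)$ that is ordered with, and lies outside, a full-orbit closure in $\omega(x)$. The difference is how the same-time ordering is upgraded to all-time ordering. The paper never leaves the original pair $(a,b)$: it sets $\Gamma^{*}(z_{1},z_{2})=\sup\{t\geq 0: z_{1}\thicksim \Phi_{s}(z_{2})\ \forall s\in [-t,t]\}$, notes that $h(t)=\Gamma^{*}(\Phi_{t}(a),\Phi_{t}(b))$ is nondecreasing and bounded above by $\Gamma^{*}(z_{1},z_{2})$, and, assuming $\Gamma^{*}(z_{1},z_{2})<\infty$, uses Lemma~\ref{cmpset-ordered} to stretch the ordering interval by a fixed $\varepsilon$ at a large finite time, contradicting the bound; no recurrence is required. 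You instead purchase recurrence --- a minimal set of the product flow yields simultaneous return times $s_j\to\infty$ for $(u_0,v_0)$ --- and then run a clopen argument in the parameter $r$, with openness powered by Remark~\ref{R:stronly-com} together with that recurrence. Your route costs extra machinery (the product flow, Zorn's lemma for the minimal set, and (A2) to make $\Psi$ a genuine two-sided flow), but it trades the paper's quantitative sup-function bookkeeping for a softer connectedness argument; the paper's route is leaner in its ingredients but fiddlier in its estimates. Two details worth making explicit in your write-up: first, Remark~\ref{R:stronly-com} is literally stated for two points on one orbit, whereas you apply it to $u_0$ and $\Phi_{r_0}(v_0)$ lying on different orbits; this is legitimate, since the underlying fact (strong monotonicity plus continuity) holds for any distinct ordered pair, and the paper itself uses it this way in proving Lemma~\ref{cmpset-ordered}, which you could cite instead with $K_{1}=\{u_{0}\}$ and $K_{2}=\{\Phi_{r_{0}}(v_{0})\}$. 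Second, your openness step tacitly uses that $\omega(b)$ is invariant under the extended flow, so that $\Phi_{r_0}(v_0)\in\omega(b)\subset\overline{O(b)}$ and hence $u_0\neq\Phi_{r_0}(v_0)$ for every $r_0\in I$; that one-line justification should be stated, as it is what keeps Remark~\ref{R:stronly-com} applicable at every point of $I$.
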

  \begin{proof} Given any $z_{1}\in \omega(a)$, there is a sequence $t_{k}\to \infty$ such that $\Phi_{t_{k}}(a)\rightarrow z_{1}$. We may also assume without loss of generality that $\Phi_{t_{k}}(b)\rightarrow z_{2}\in \o(b)$. Because $\overline{O(a)}\cap \overline{O(b)}=\emptyset$, we have $z_{1}\thicksim z_{2}\neq z_{1}$.

  Let $\Gamma^{*}(z_{1},z_{2})=\sup\{t\geq 0: z_{1}\thicksim \Phi_{s}(z_{2})\, \text{for all}\, s\in [-t,t]\}$
and define the function $h(t)=\Gamma^{*}(\Phi_{t}(a),\Phi_{t}(b))$ for $t\ge 0$. Then it is not difficult to see that
$h(t)$ is nondecreasing on $[0,+\infty)$. Note also that $\Gamma^{*}(\Phi_{t_{k}}(a),\Phi_{t_{k}}(b))\leq \Gamma^{*}(z_{1},z_{2})$ for any $k\ge 0$. Then it follows that
\begin{equation}\label{tau*-}
\Gamma^{*}(\Phi_{t}(a),\Phi_{t}(b))\leq \Gamma^{*}(z_{1},z_{2})
\end{equation}
for any $t> 0$.

Suppose that $\Gamma^{*}(z_{1},z_{2})<+\infty$. Then we define $K=\{\Phi_{s}(z_{2}): s\in[-\Gamma^{*}(z_{1},z_{2}),$ $\Gamma^{*}(z_{1},z_{2})]\} $. Clearly, $K$ is compact. Noticing that $\overline{O(z_{1})}\cap \overline{O(z_{2})}=\emptyset$, one has $z_1\notin K$. Then there exists $U$ (resp. $V$) of $z_{1}$ (resp. $K$) and time $T>0$ such that $\Phi_{t}(U)\thicksim \Phi_{t}(V)$ for all $t\geq T$. Therefore, one can find an $N>0$ such that $\Phi_{t_{k}}(a)\in U$ and $\Phi_{t_{k}+s}(b)\in V$  for all $k\geq N$ and $s\in[-\Gamma^{*}(z_{1},z_{2}),\Gamma^{*}(z_{1},z_{2})]$. Fix such an $N>0$, there is an
 $\varepsilon> 0$ such that
$\Phi_{s+l+t_{N}}(b)\in V$ whenever $\abs{s}\le \Gamma^{*}(z_{1},z_{2})$ and $\abs{l}\le\varepsilon$.  Accordingly, we obtain that
$\Phi_{t_{N}+T}(a)\sim \Phi_{t_{N}+T+\Gamma^{*}(z_{1},z_{2})+\varepsilon}(b)$; and hence, $\Gamma^{*}(\Phi_{t_{N}+T}(a),\Phi_{t_{N}+T}(b))\geq\Gamma^{*}(z_{1},z_{2})+\varepsilon$, which contradicts \eqref{tau*-}.
Thus we have obtained  $\Gamma^{*}(z_{1},z_{2})=+\infty$, which implies that $z_{1}\thicksim \overline{O(z_{2})}$.

By virtue of Lemma \ref{parti-ordered}, we have $\overline{O(z_{2})}\thicksim \omega(x)$, i.e, $B\neq \emptyset$.
\end{proof}

\vskip 2mm

\begin{prop}\label{B-empty}
Let $O^{+}(x)$ be a nontrivial pseudo-ordered precompact semi-orbit and assume that {\rm (F)} in \eqref{funda-assum} holds.  Assume also that $B=\emptyset$. Then
 $\omega(x)\subset E$ is a non-ordered set, where $E$ is the set of all the equilibria of $\Phi_t$.
\end{prop}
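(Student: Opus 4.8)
The plan is to argue by contradiction, producing from the assumption $\omega(x)\not\subset E$ an orbit closure that is ordered with all of $\omega(x)$, which contradicts $B=\emptyset$. Once $\omega(x)\subset E$ is known, the non-orderedness of $\omega(x)$ drops out immediately: two distinct ordered equilibria would have disjoint singleton orbit closures, so Lemma~\ref{order-sqeu-1} would give $B\neq\emptyset$.

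First I would extract the workhorse consequence of $B=\emptyset$: \emph{any two comparable points of $\omega(x)$ share the same orbit closure.} Suppose $c_1\thicksim c_2$ with $\overline{O(c_1)}\neq\overline{O(c_2)}$. If the two closures are disjoint, Lemma~\ref{order-sqeu-1} directly yields $B\neq\emptyset$. If they intersect, pick $r$ in the intersection; then $\overline{O(r)}\subset\overline{O(c_1)}\cap\overline{O(c_2)}$ is invariant and compact, and it must be a proper subset of (say) $\overline{O(c_1)}$, for otherwise the two closures would coincide. Since $\overline{O(c_1)}$ is ordered (Theorem~A), any point of $\overline{O(c_1)}\setminus\overline{O(r)}$ is ordered with $\overline{O(r)}$, so Lemma~\ref{parti-ordered} forces $\overline{O(r)}\thicksim\omega(x)$, i.e. $B\neq\emptyset$. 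In all cases we contradict $B=\emptyset$, proving the claim.

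Now assume some $p\in\omega(x)$ is not an equilibrium and put $N=\overline{O(p)}$; this is compact, invariant, nontrivial, ordered by Theorem~A, and a \emph{proper} subset of $\omega(x)$ by {\rm (F)}. The workhorse claim makes $N$ order-isolated: every $y\in\omega(x)\setminus N$ is unordered with every point of $N$, since $y\thicksim n$ for some $n\in N$ would give $\overline{O(y)}=\overline{O(n)}\subset N$ and hence the absurdity $y\in N$. I would then invoke connectedness of $\omega(x)$: as $N$ is a nonempty proper closed subset, it cannot be open in $\omega(x)$, so there exist $n^{*}\in N$ and exterior points $q_j\in\omega(x)\setminus N$ with $q_j\to n^{*}$. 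Because $N$ is invariant for the flow extension granted by {\rm (A2)}, one has $\Phi_t(q_j)\in\omega(x)\setminus N$ and $\Phi_t(q_j)\to\Phi_t(n^{*})$ for every $t$, so each $\Phi_t(n^{*})$ is again a limit of exterior points.

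The contradiction, which I expect to be the crux, comes from confronting this boundary behaviour with strong monotonicity. Fix any $m_0\in N\setminus\{n^{*}\}$ and any $t>0$. Applying order-isolation to the exterior points $\Phi_t(q_j)$ gives $\Phi_t(q_j)-\Phi_t(m_0)\notin C$; letting $j\to\infty$, and using that $\Phi_t(n^{*})\thicksim\Phi_t(m_0)$ (both lie in the ordered set $N$) together with the closedness of $C$, we obtain $\Phi_t(n^{*})-\Phi_t(m_0)\in\partial C$. On the other hand $n^{*}\thicksim m_0$ with $n^{*}\neq m_0$, so strong monotonicity {\rm (A1)} gives $\Phi_t(n^{*})-\Phi_t(m_0)\in\tint C$ for $t>0$. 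As $\partial C\cap\tint C=\emptyset$, this is impossible; hence no such $p$ exists and $\omega(x)\subset E$, completing the proof together with the opening remark. The delicate point is precisely this last mechanism: converting the soft topological statement ``$N$ is not open in the connected set $\omega(x)$'' into the rigid constraint that exterior points drive the relevant differences onto $\partial C$, where the strong order $\tint C$ cannot be sustained.
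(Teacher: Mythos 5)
Your proof is correct, but it reaches the contradiction by a genuinely different mechanism than the paper's. The order-theoretic groundwork is shared: your ``workhorse'' claim (under $B=\emptyset$, comparable points of $\omega(x)$ share an orbit closure) is in substance the paper's combination of Proposition \ref{property-of-A-B}(ii) (distinct closures in $A$ are disjoint) with Lemma \ref{order-sqeu-1}, and both arguments lean on Theorem A, Lemma \ref{parti-ordered}, assumption (F) and the flow extension (A2) in the same way. The divergence is in how non-equilibria are excluded. The paper proves a stronger accumulation claim --- every neighborhood of \emph{every} $a\in\omega(x)$ meets $\omega(x)\setminus\overline{O(a)}$ --- and this is the heaviest part of its proof: it first shows each orbit closure is minimal, then runs a recurrence/covering construction with return times $N(a,\sigma)$, and only then invokes connectedness of $\omega(x)$; the final contradiction uses the neighborhood form of strong monotonicity (Remark \ref{R:stronly-com}) applied to the ordered pair $a,\Phi_T(a)$. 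You instead need exterior accumulation at only \emph{one} point of $N=\overline{O(p)}$, which is immediate from connectedness ($N$ is nonempty, closed and proper, hence not open in $\omega(x)$), and you convert it into a contradiction by a limit argument inside the cone: order-isolation keeps the differences $\Phi_t(q_j)-\Phi_t(m_0)$ outside $C$, so the limit $\Phi_t(n^*)-\Phi_t(m_0)$ lies outside $\tint C$, while strong monotonicity forces it into $\tint C$. This buys a substantially shorter proof that bypasses minimality and the covering construction altogether; what it gives up is only the stronger (and here unneeded) conclusion that exterior points accumulate at every point of $\omega(x)$. Two small points to make explicit when writing it up: when you pick $r\in\overline{O(c_1)}\cap\overline{O(c_2)}$, the inclusion $\overline{O(r)}\subset\overline{O(c_1)}\cap\overline{O(c_2)}$ rests on the uniqueness of negative semi-orbits in $\omega(x)$ guaranteed by (A2); and the step $\Phi_t(q_j)\in\omega(x)\setminus N$ uses injectivity of $\Phi_t$ on $\omega(x)$, again from the flow extension, so it deserves one explicit line.
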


\begin{proof}
 Clearly, $A\ne \emptyset$ because $B=\emptyset$. So the cardinality of $A$ is at least $2$ due to the condition (F) and the statement at the beginning of the proof of Proposition \ref{B-nonempty}.
  So, if $\overline{O(a)}$ and $\overline{O(b)}$ are any two distinct elements in $A$, then it follows from $B=\emptyset$ and Proposition \ref{property-of-A-B}(ii) that $\overline{O(a)}\cap \overline{O(b)}=\emptyset$. By Lemma \ref{order-sqeu-1}, one can further obtain that $a\rightharpoondown b$. (otherwise, $B\ne \emptyset$, a contradiction.)

{\it We now claim that: For any $a\in\omega(x)$ and any neighborhood $B_{\e}(a)$ of $a$,
$B_{\e}(a)\cap \omega(x)\nsubseteq \overline{O(a)}$.}
 Before we give the proof of the claim, we first show how it implies our conclusion. Suppose that one can find an $a\in \omega(x)\setminus E$. Then there exists $T>0$ such that $a\neq \Phi_{T}(a)$. By Theorem A, $a\thicksim \Phi_{T}(a)$. Then, one can find a neighborhood $V$ of $\Phi_{T}(a)$ such that $\Phi_t(a)\thicksim \Phi_t(V)$ for all $t\ge T_{1}$. Therefore, the above claim will imply that there is a $d\in V\cap \omega(x)$ with $d\notin \overline{O(\Phi_T(a))}$, and hence, $\overline{O(d)}\cap\overline{O(a)}=\emptyset$. Together with $B=\emptyset$, Lemma \ref{order-sqeu-1} entails that $\Phi_{T_1}(a)\rightharpoondown\Phi_{T_1}(d)$. This contradicts $\Phi_{T_1}(U)\sim \Phi_{T_1}(V)$. Thus, we have proved that
  $\omega(x)\subset E$. Again by Lemma \ref{order-sqeu-1}, one can further obtain that $\omega(x)$ is non-ordered, which are the statements in this Proposition. So, in order to complete the proof of this Proposition, it suffices to prove the claim above.
  \vskip 2mm

{\it Proof of the claim:}  We first point out that $\overline{O(a)}$ is a minimal set for any $a\in\omega(x)$. Indeed, for any point $a\in\omega(x)$, the fact $B=\emptyset$ implies that $\overline{O(a)} \in A$. Suppose that $\overline{O(a)}$ is not minimal. Then one can find some point $c\in \overline{O(a)}$ such that $\overline{O(a)}\setminus \overline{O(c)}\ne \emptyset$. Together with Lemma \ref{parti-ordered}, it yields that $\overline{O(c)}\in B$, contradicting $B=\emptyset$.

 Now we suppose that the claim is not correct. Then there exists $\sigma>0$ such that $B_{\s}(a)\cap \omega(x)\subseteq\overline{O(a)}$. Recall that $\overline{O(a)}$ is a minimal set. Then one can find a positive integer $N(a,\sigma)$ satisfying that: For any fixed $q>0$, there exists an integer $r>0$ such that
\begin{equation}\label{aa-vva}
q\leq r\leq q+N(a,\sigma)\,\,\textnormal{ and }\,\,\Phi_r(a)\in B_{\frac{\sigma}{2} }(a).
\end{equation}
 By virtue of \eqref{aa-vva}, we can choose an increasing positive number sequence $\{t_{n}\}_{n=1}^{\infty}$ satisfying (i) $\Phi_{t_{n}}(a)\in B_{\frac{\sigma}{2} }(a)$; (ii) $\abs{ t_{n+1}-t_{n}}\leq N(a,\sigma)$. Then for any fixed $t>0$, one can take some $n>0$
 such that $t_{n}\leq t<t_{n+1}$; and hence,
 $$\Phi_t(a)\in \Phi_{t-t_{n+1}}(\Phi_{t_{n+1}}(a))\subset \Phi_{t-t_{n+1}}(B_{\frac{\sigma}{2} }(a))\subset \bigcup_{s\in[0,N(a,\sigma)]} \Phi_{-s}(B_{\frac{\sigma}{2} }(a)).$$
Therefore, we will further see that $\bigcup_{s\in[0,N(a,\sigma)]}\Phi_{-s}(B_{\sigma}(a))$ is an open cover of $\overline{O^+(a)}$.
Indeed, given any $b\in \overline{O^{+}(a)}$, choose a sequence $s_{i}>0$ such that $\Phi_{s_{i}}(a)\rightarrow b$. Since $a$ is a minimal point, there exists $\tau_{i}$ such that $s_{i}<\tau_{i}<s_{i}+N(a,\frac{\sigma}{2})$ and $\Phi_{\tau_{i}}(a)\in B_{\frac{\sigma}{2} }(a)$. For each $i$, write $\tau_{i}=s_{i}+r_{i}$ for some $r_{i}\in [0,N(a,\frac{\sigma}{2})]$. Without loss of generality, assume that $r_{i}\rightarrow r\in [0,N(a,\frac{\sigma}{2})]$. Then $\Phi_{\tau_{i}}(a)\rightarrow \Phi_{r}(b)$, which implies that $\Phi_{r}(b)\in\overline{B_{\frac{\sigma}{2}}(a)}\subseteq B_{\sigma}(a)$. Consequently, $\cup_{s\in[0,N(a,\sigma)]}\Phi_{-s}(B_{\sigma}(a))$ is an open cover of $\overline{O^{+}(a)}$. Noticing that $\overline{O^{+}(a)}=\overline{O(a)}$ (because $\overline{O(a)}$ is a minimal), it follows that $\cup_{s\in[0,N(a,\sigma)]}\Phi_{-s}(B_{\sigma}(a))$ is an open cover of $\overline{O(a)}$.

Recall that $B_{\sigma}(a)\cap \omega(x)\subseteq \overline{O(a)}$. It then follows that $$\left( \bigcup_{s\in[0,N(a,\sigma)]}\Phi_{-s}(B_{\sigma}(a))\right)\bigcap\omega(x)\subseteq\overline{O(a)}$$ (Otherwise, there exists a point $d \in \Phi_{-s}(B_{\sigma}(a))\cap \omega(x) $ such that $d\notin \overline{O(a)}$; and hence, $\overline{O(a)}\cap\overline{O(d)}=\emptyset$. Then, one has $\Phi_s(d)\in B_{\sigma}(a)\cap \omega(x)$ and
$\overline{O(\Phi_s(d))}\cap\overline{O(a)}=\emptyset$, a contradiction to  $B_{\sigma}(a)\cap \omega(x)\subseteq \overline{O(a)}$).

On the other hand, it is clear that $\overline{O(a)}\subseteq\left( \bigcup_{s\in[0,N(a,\sigma)]}\Phi_{-s}(B_{\sigma}(a))\right)\bigcap\omega(x)$. So,
\begin{equation}\label{good-1}
\left( \bigcup_{s\in[0,N(a,\sigma)]}\Phi_{-s}(B_{\sigma}(a))\right)\bigcap\omega(x)=\overline{O(a)}.
\end{equation}
As a consequence, we obtain that $\omega(x)\nsubseteq \bigcup_{s\in[0,N(a,\sigma)]}\Phi_{-s}(B_{\sigma}(a))$ (Otherwise, $\omega(x)=\overline{O(a)}$, contradicting \textbf{(F)} in \eqref{funda-assum}).

Since $\bigcup_{s\in[0,N(a,\sigma)]}\Phi_{-s}(B_{\sigma}(a))$ is an open cover of $\overline{O(a)}$, it follows from \eqref{good-1} that one can find a smaller open set $U$ such that $\overline{O(a)}\subset U\subsetneq \bigcup_{s\in[0,N(a,\sigma)]}$ $\Phi_{-s}(B_{\sigma}(a))$ and $U\cap \omega(x)=\overline{O(a)}$. Noticing that $\omega(x)\nsubseteq \bigcup_{s\in[0,N(a,\sigma)]}\Phi_{-s}(B_{\sigma}(a))$, we have obtained
 a contradiction to the connectivity of $\omega(x)$. Thus, we have completed the proof.
\end{proof}

\vskip 2mm
Now we are ready to prove Theorems B and D.
\vskip 2mm

{\it Proof of Theorem B.}
If (F) in \eqref{funda-assum} does not hold, i.e., $\overline{O(a)}=\omega(x)$ for some $a\in \o(x)$, it then directly follows from Theorem A that $\o(x)$ is ordered. If (F) in \eqref{funda-assum} holds, then Propositions \ref{B-nonempty} and \ref{B-empty} will imply the Trichotomy listed in Theorem B.

  In particular, if $\omega(x)\cap E=\emptyset$, then case (ii) can not happen. Moreover, Proposition \ref{B-nonempty}(ii) also implies that case (iii) can not happen either. As a consequence,
     $\omega(x)$ must be ordered.

  Finally, if $C$ is completed, then we choose a codim-$k$ subspace $H^c\subset X$ such that $H^{c}\cap C=\{0\}$. Based on the definition of $C$, one can also find a dim-$k$ subspace $H\subset C$. Consequently, $H \cap H^{c}=\{0\}$; and moreover, $H\oplus H^{c}=X $.
Now take $\Theta :X \rightarrow H $ the linear projection onto $H$ along $H^{c}$. Since $\o(x)$ is now ordered, one has $\Theta(a)\neq \Theta(b)$ for any distinct $a,b\in\omega(x)$. (Otherwise, $a-b\in H^{c}\setminus\{0\}$.  This then implies that $a-b\notin C$, contradicting that $\o(x)$ is ordered). Denote by $\Theta_{\omega}:=\Theta|_{\omega(x)}$ the restriction of $\Theta$ to $\o(x)$. Clearly, $\Theta_{\omega}$ is an one-to-one map. Then one can repeat the argument in the proof of
      \cite[Theorem 3.17]{HirSm} to obtain that $\omega(x)$ is topologically conjugate to
a compact invariant set of a Lipschitz-continuous vector field in $\mathbb{R}^k$.
$\quad\square$
\vskip 3mm

{\it Proof of Theorem D.}
 Assume that $\omega(x)$ is not an equilibrium. Suppose also that there are two distinct points $p,q\in \omega(x)$ such that $p\thicksim q$. Then one can find open sets $U\ni p$, $V\ni q$ and time $T>0$ such that $\Phi_t(U)\thicksim \Phi_t(v)$ for $t>T$. Now choose $t_{1},t_{2}>0$ so large that $\Phi_{t_{1}}(x)\in U$, $\Phi_{t_{2}}(x)\in V$. So, one has $\Phi_{t_{1}}(x)\ne \Phi_{t_{2}}(x)$ and $\Phi_{t_{1}+T}(x)\thicksim\Phi_{t_{2}+T}(x)$, which contradicts that $O(x)$ is of Type-II.
$\qquad\qquad\qquad\qquad\square$

\section{Poincar\'{e}-Bendixson Theorem (Theorem C)}

In this section, we focus on the Poincar\'{e}-Bendixson type Theorem (Theorem \ref{P-B for semif} or Theorem C) for $2$-cones. As we mentioned in the introduction, Sanchez \cite{San09} obtained this Theorem by using the generalized Perron-Frobenius Theorem (see \cite{FO1}) and theory of invariant manifolds in $\mathbb{R}^n$, which strongly depend on the $C^1$-smoothness assumption of $\Phi_t$.
We will prove this Theorem on a Banach space $X$ without this smoothness assumption. Our approach is motivated by \cite{Smi95} and utilizes the chain-recurrent property of $\Omega$.

\begin{thm}\label{P-B for semif}
{\it Let {\rm (A1)-(A2)} hold. Assume that $k=2$ and $C$ is complemented. Let $O^{+}(x)$ be a nontrivial pseudo-ordered precompact semi-orbit. If $\omega(x)\cap E=\emptyset$, then $\omega(x)$ is a periodic orbit.}
\end{thm}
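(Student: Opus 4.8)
The plan is to transfer the problem, via Theorem B, to a genuine planar flow and then combine the classical Poincar\'e--Bendixson theorem with the chain-recurrence of $\omega(x)$ to rule out everything except a single closed orbit.

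First I would record the consequences of Theorem B. Since $\omega(x)\cap E=\emptyset$, Theorem B yields that $\omega(x)$ is ordered, and because $k=2$ and $C$ is complemented, the projection $\Theta_\omega=\Theta|_{\omega(x)}$ is a homeomorphism of $\omega(x)$ onto a compact invariant set $\hat\omega:=\Theta_\omega(\omega(x))\subset\mathbb{R}^2$ of a Lipschitz-continuous planar flow $\hat\Phi_t$, conjugating $\Phi_t|_{\omega(x)}$ to $\hat\Phi_t|_{\hat\omega}$. By (A2) this is a genuine two-sided flow, so every point of $\hat\omega$ has a full orbit with well-defined $\alpha$- and $\omega$-limit sets contained in $\hat\omega$. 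Moreover $\hat\omega$ contains no equilibrium of $\hat\Phi_t$ (a fixed point of $\hat\Phi_t$ in $\hat\omega$ would correspond to an equilibrium of $\Phi_t$ in $\omega(x)$), and since $\omega(x)$ is chain-recurrent in itself and chain-recurrence is preserved under topological conjugacy, $\hat\omega$ is chain-recurrent in itself.

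Next I would produce one closed orbit. Fix $p\in\hat\omega$; its limit set $\omega(p)\subseteq\hat\omega$ is a nonempty compact connected invariant planar $\omega$-limit set containing no equilibrium, so the classical Poincar\'e--Bendixson theorem (valid for planar flows with uniqueness, hence for our Lipschitz field) forces $\omega(p)$ to be a periodic orbit $\hat\Gamma$. Thus $\hat\omega$ contains at least one periodic orbit $\hat\Gamma$, a Jordan curve bounding a disk $D$.

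The final and \emph{hardest} step is to show $\hat\omega=\hat\Gamma$. Suppose not, and pick $q\in\hat\omega\setminus\hat\Gamma$. Since distinct orbits of a planar flow cannot cross the orbit $\hat\Gamma$, the full orbit of $q$ lies entirely in one open component of $\mathbb{R}^2\setminus\hat\Gamma$; say it lies in $\tint D$ (the outside case is identical). Applying Poincar\'e--Bendixson to $q$ both forward and backward, and using that $\hat\omega$ has no equilibria, both $\omega(q)$ and $\alpha(q)$ are periodic orbits contained in $\overline D\cap\hat\omega$; since a limit cycle is attracting or repelling, but not both, from a fixed side, one gets $\alpha(q)\neq\omega(q)$, and these two nested Jordan curves bound a closed annulus $R\subset\overline D$ through which the orbit of $q$ drifts monotonically. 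On $R$ one can construct a continuous ``transversal'' coordinate that is strictly increasing along every non-periodic orbit of $\hat\Phi_t$ in $R$, so that a single flow segment of length $\ge r$ raises it by a fixed positive amount while an $\varepsilon$-jump lowers it by at most $O(\varepsilon)$. For small $\varepsilon$ this precludes any $(\varepsilon,r)$-chain from $q$ back to $q$ inside $\hat\omega$, contradicting chain-recurrence of $\hat\omega$; equivalently, the heteroclinic connection from $\alpha(q)$ to $\omega(q)$ exhibits a proper attractor in $\hat\omega$, which a chain-recurrent set cannot possess. Hence $\hat\omega=\hat\Gamma$, and pulling back through $\Theta_\omega^{-1}$ shows $\omega(x)$ is a periodic orbit. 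I expect the main obstacle to be precisely this last step: making rigorous, for a merely Lipschitz planar flow, the monotone transversal coordinate on $R$ and its incompatibility with $(\varepsilon,r)$-chains, together with the bookkeeping that every point off $\hat\Gamma$ indeed produces such an annulus. The remaining ingredients -- the planar reduction and the production of one closed orbit -- follow at once from Theorem B and classical Poincar\'e--Bendixson.
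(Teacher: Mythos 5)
Your reduction to the plane via Theorem B, the transfer of chain recurrence through the conjugacy, and the production of one periodic orbit $\hat\Gamma\subset\hat\omega$ by Poincar\'e--Bendixson all match the first half of the paper's proof (which invokes the argument of \cite[Theorem 4.1]{Smi95} for exactly this part). The genuine gap is in your final step. To build the annulus $R$ and the monotone transversal coordinate you need $\alpha(q)\neq\omega(q)$, and your justification (``a limit cycle is attracting or repelling, but not both, from a fixed side'') is valid only when the orbit of $q$ is not itself periodic. If $q$ lies on another periodic orbit, then $\alpha(q)=\omega(q)$ is the orbit of $q$ and your construction never starts. This is not a removable technicality: the planar chain-recurrence structure theorem gives precisely the dichotomy that $\hat\omega$ is a single periodic orbit \emph{or an annulus filled with nested periodic orbits}, and the second alternative is compact, connected, invariant, equilibrium-free, and chain recurrent in itself (each point returns to itself along its own cycle; one can even chain between distinct cycles by small transversal jumps). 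Hence no argument based solely on chain recurrence of the planar set, as yours is, can exclude it; your proposal in effect re-proves only the part of the dichotomy that rules out spiraling (non-periodic) orbits, which is the easy case.

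To close the proof one must return to the Banach space and use strong monotonicity, which is what the paper does. If $\Theta(\omega(x))$ were an annulus of periodic orbits, pick a cycle $\gamma\subset\omega(x)$ with $\Theta(\gamma)$ interior to the annulus. Since $\Theta(\Phi_t(x))$ accumulates on both complementary components of $\Theta(\omega(x))\setminus\Theta(\gamma)$, there are $t_k\to\infty$ and $z_k\in\gamma$ with $\Theta(\Phi_{t_k}(x))=\Theta(z_k)$, i.e. $\Phi_{t_k}(x)-z_k\in H^{c}\setminus\{0\}$, so $\Phi_{t_k}(x)\rightharpoondown z_k$; by monotonicity this forces, for every $s>0$, some $w_s\in\gamma$ with $\Phi_s(x)\rightharpoondown w_s$, and passing to the limit along $\Phi_{s_n}(x)\to y\in\omega(x)\setminus\gamma$ produces $w\in\gamma$, $w\neq y$, with $y-w\notin{\rm int}\,C$. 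This contradicts the fact that strong monotonicity makes $\omega(x)$ strongly ordered (Remark \ref{rem-f}). Note where the original dynamics enters: the point $x$ itself does not lie in $\omega(x)$ and has no counterpart in the planar flow (the conjugacy is defined only on $\omega(x)$, and $\hat\omega$ need not be an $\omega$-limit set of the planar flow), so this crossing argument is invisible after projection --- which is exactly why your purely planar strategy cannot complete the proof.
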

\begin{proof}
For brevity, we write $L=\omega(x)$. By virtue of Theorem B, $L$ is ordered and topologically conjugate to a flow $\Psi_t$ on the compact invariant set $\Theta(L)\subset\mathbb{R}^2$.
Clearly, $L$ is a chain-recurrent set. It is also not difficult to see that $\Theta(L)$ is a chain-recurrent set with respect to $\Psi_t$.

 Note that $\Theta(L)\subset \mathbb{R}^2$ and $\Theta(L)$ does not contain any equilibrium of $\Psi_t$ (because $L\cap E=\emptyset$). Then, by following the same argument in \cite[Theorem 4.1]{Smi95}, the chain-recurrent property of $\Theta(L)$ will imply that $\Theta(L)$ is either a single periodic-orbit or consists of an annulus of periodic orbits.

We now rule out the possibility of the case that $\Theta(L)$ is an annulus of periodic orbits. Suppose not, let $\g$ be a periodic orbit in $L$ such that the periodic orbit $\Theta(\g)\subset {\rm Int}(\Theta(L))\subset \mathbb{R}^2$. Then $\Theta(\g)$ separates $\Theta(L)$ into two components. Fix $a,b\in L$ such that $\Theta(a),\Theta(b)$ belong to the different component of $\Theta(L)\setminus \Theta(\g)$. Since $\{\Phi_t(x)\}_{t\ge 0}$ will repeatedly revisit the neighborhood of $a$ and $b$, $\Theta(\Phi_t(x))$ will intersect $\Theta(\g)$ at a sequence $t_k\to \infty$. So, one can choose $z_k\in \g, k=1,2,\cdots,$ such that $\Theta(\Phi_{t_k}(x))=\Theta(z_k)$, which entails that $\Phi_{t_k}(x)\rightharpoondown z_k$ for $k=1,2,\cdots.$ Therefore, one can obtain that, {\it for any $s>0$, there exists some $w_s\in \g$ such that $\Phi_s(x)\rightharpoondown w_s$.} (Otherwise, one can find some $s>0$ such that $\Phi_s(x)\sim \g$. Then one can choose some $t_k>s$, it follows from monotonicity of $\Phi_t$ that $\Phi_{t_k}(x)\sim \g$, contradicting $\Phi_{t_k}(x)\rightharpoondown z_k\in \g$.)

 Now, choose any
$y\in L\setminus \g$, there exists a sequence $s_n\to \infty$ such that $\Phi_{s_n}x\to y$ as $n\to \infty$. By the assertion above, one can also find $w_n\in \g$ such that $\Phi_{s_n}(x)\rightharpoondown w_n\in \g$. Without loss of generality, one can assume that $w_n\to w\in \g$ as $n\to \infty$. So, by letting $n\to \infty$, we have (by strongly monotone property)
$y\rightharpoondown w$ or $y-w\in \partial C\setminus \{0\}$. Noticing that $y\ne w$ and $y,w\in L$, we have obtained a contradiction to the fact that $L$ is strongly ordered (see the following Remark \ref{rem-f}).

Thus, we have rule out the possibility of the case that $\Theta(L)$ is an annulus of periodic orbits. As a consequence, $\Theta(L)$ is a single periodic-orbit. This immediately implies that $L$ is a single periodic-orbit. We have completed the proof.
\end{proof}

\vskip 2mm

\begin{rmk}\label{rem-f}
\textnormal{Due to the strong monotonicity (which is stronger than monotonicity), we have $\omega(x)$ is strongly ordered, i.e., for any two points $y,z\in \omega(x)$, $y-z\in {\rm int}C$.}
\end{rmk}

\section{Discussion and further results}

In this section, we will discuss the relationship between our work on strongly monotone systems with respect to $k$-cones and the established theory of several well-known systems.
\vskip 2mm

\noindent $\bullet$ {\it Competitive Dynamical Systems.} The well-known competitive dynamical systems on $\mathbb{R}^n$ (see \cite{HirSm,WJ} and references therein) can be viewed as monotone systems with respect to the $(n-1)$-cone $C$ whose complemented cone is of rank-$1$.

In the terminology of
strongly monotone systems with respect to $C$, one of the most remarkable results in strongly competitive systems can be reformulated as: {\it Any omega-limit set of competitive systems is ordered with respect to $C$ and is topologically conjugate to a compact flow in $\mathbb{R}^{n-1}$} (see, e.g. \cite{Smi95,WJ}), which is exactly the problem we discussed in this paper. Moreover, for competitive systems, the additional non-oscillation principle (see \cite{Hirsch1,Smi95,WJ}, due to the rank-1 property of the complemented cones) guarantees that any  omega-limit set is ordered with respect to $C$. While in our setting, since the complemented cone is not necessarily rank-$1$, one can only obtain the partial information of the ordering of the omega-limit sets as in our Theorems A, B and D.

\vskip 3mm
\noindent $\bullet$ {\it Systems with Quadratic Cones.}
\vskip 2mm

Consider a system
\begin{equation}\label{E:ODE-sys}
\dot{x}=F(x),\quad x\in S\subset \mathbb{R}^n,
\end{equation}
in which the function $F:S\to \mathbb{R}^n$ satisfies a local Lipschitz condition in an open subset $S\subset\mathbb{R}^n$.

Let $P$ be a constant real symmetric non-singular matrix $n\times n$ matrix, with $2$ negative eigenvalues and $(n-2)$ positive eigenvalues. Then the set
\begin{equation}\label{E:2-cone}
C^-(P)=\{x\in \mathbb{R}^n:x^*Px\le 0\}
\end{equation} is a $2$-solid cone which is also complemented. Here $x^*$ denote the transpose of the vector $x\in \mathbb{R}^n$.

When $F$ is of class $C^1$ in \eqref{E:ODE-sys}, Sanchez \cite{San09} proved the connections between R. Smith's results \cite{SmithR-2,SmithR-3} and the strongly monotone systems with respect to the quadratic cone $C^-(P)$. In our work here, we confirm this connection even for locally Lipschitz continuous vector fields $F$. In this sense, Theorem C in Section 2 can be viewed as a generalization of the Poincar\'{e}-Bendixson Theorem of R. Smith in \cite{SmithR-2,SmithR-3}.

\begin{prop}\label{P-B-thm-C-p}
 Assume that there is a real number $\lambda$ (not necessarily positive), such that
\begin{equation}\label{decay-11}
(x-y)^*\cdot P\cdot [F(x)-F(y)+\lambda (x-y)]<0
\end{equation}
for any $x,y\in S$. Then the flow generated by \eqref{E:ODE-sys} is strongly monotone w.r.t. $C^-(P)$; and hence, Theorem C holds for system \eqref{E:ODE-sys}.
\end{prop}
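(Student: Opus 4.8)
The plan is to track the quadratic form defined by $P$ along the difference of two solutions and to exploit \eqref{decay-11} as a strict dissipation inequality after an exponential rescaling. Let $u(t)=\Phi_t(x)$ and $v(t)=\Phi_t(y)$ be two solutions of \eqref{E:ODE-sys} on their common interval of existence, and set $z(t)=u(t)-v(t)$. Since $F$ is continuous, $z$ is $C^1$ in $t$, so the scalar function $Q(t):=z(t)^{*}Pz(t)$ is differentiable, and using $\dot z=F(u)-F(v)$ together with the symmetry of $P$ I would compute
\begin{equation*}
\dot Q(t)=2\,z(t)^{*}P\bigl(F(u(t))-F(v(t))\bigr).
\end{equation*}
Applying \eqref{decay-11} with the pair $u(t),v(t)$ gives $z^{*}P(F(u)-F(v))<-\lambda\,z^{*}Pz=-\lambda Q$ whenever $z\ne0$, hence $\dot Q<-2\lambda Q$. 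Writing $W(t):=e^{2\lambda t}Q(t)$, this is equivalent to
\begin{equation*}
\frac{d}{dt}W(t)=e^{2\lambda t}\bigl(\dot Q(t)+2\lambda Q(t)\bigr)<0
\end{equation*}
at every $t$ with $z(t)\ne0$; so $W$ is strictly decreasing as long as the two solutions stay distinct.

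The second step is to remove the caveat ``as long as $z\neq0$''. Because $F$ is locally Lipschitz, solutions of \eqref{E:ODE-sys} are unique forward and backward in time, so if $x\neq y$ then $z(t)\neq0$ for every $t$ in the interval of existence, and $W$ is therefore strictly decreasing throughout. I would pair this with the identification $\tint C^{-}(P)=\{w\in\mathbb{R}^n:w^{*}Pw<0\}$, which holds because $P$ is nonsingular and symmetric so that $\{w^{*}Pw<0\}$ is an open set whose closure is exactly $C^{-}(P)$.

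Third, I would read off the monotonicity properties. Suppose $x\thicksim y$, i.e.\ $z(0)^{*}Pz(0)\le0$, so $W(0)=Q(0)\le0$. If $x=y$ there is nothing to prove; if $x\ne y$ then $W$ decreases strictly, whence $W(t)\le W(0)\le0$ and thus $Q(t)\le0$ for all $t\ge0$, that is $\Phi_t(x)\thicksim\Phi_t(y)$, giving monotonicity. For strong monotonicity take $x\ne y$, $x\thicksim y$ and $t>0$: strict decrease yields $W(t)<W(0)\le0$, so $Q(t)<0$ and $\Phi_t(x)-\Phi_t(y)\in\tint C^{-}(P)$, i.e.\ $\Phi_t(x)\thickapprox\Phi_t(y)$. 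Finally, since $C^{-}(P)$ is a $2$-solid complemented cone and the flow is strongly monotone with respect to it, assumption (A1) holds; (A2) holds automatically because the autonomous ODE with unique solutions restricts to a flow on any compact invariant set, so Theorem C applies with $k=2$ to yield the Poincar\'e--Bendixson conclusion.

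The differential inequality itself is routine. The points that I expect to require care, rather than the computation, are the uniqueness argument that keeps $z$ away from $0$ (so the \emph{strict} inequality in \eqref{decay-11} is never wasted and is what upgrades weak to strong ordering) and the identification of $\tint C^{-}(P)$ with the strict sublevel set of the quadratic form; these are the only places where the sign of $\lambda$ being unrestricted and the nondegeneracy of $P$ genuinely enter.
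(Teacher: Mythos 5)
Your proposal is correct and follows essentially the same route as the paper: both track the quadratic form $V(w)=w^{*}Pw$ along the difference of two solutions and use \eqref{decay-11} to show that $t\mapsto e^{2\lambda t}V(x(t)-y(t))$ is strictly decreasing, which immediately yields strong monotonicity with respect to $C^{-}(P)$. The only difference is that you make explicit several points the paper leaves implicit — backward/forward uniqueness keeping $z(t)\neq 0$, the identification $\tint C^{-}(P)=\{w:w^{*}Pw<0\}$, and the verification of (A2) — which is a welcome tightening rather than a new idea.
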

\begin{proof}
For such $\lambda$, define $V(x)=x^*Px, x\in \mathbb{R}^n$. A direct calculation yields that
\begin{eqnarray*}
&&\frac{d}{dt}[V(x(t)-y(t))+2\lambda V(x(t)-y(t))]\\
&=&2(x(t)-y(t))^*P\cdot [F(x(t))-F(y(t))+\lambda (x(t)-y(t))].
\end{eqnarray*}
Together with \eqref{decay-11}, this implies that the function $t\mapsto e^{2\lambda t}V(x(t)-y(t))$ is strictly decreasing for $t\ge 0$, whenever $x(t),y(t)\in S$.

Given any $x-y\in C^-(P)\setminus \{0\}$, one has $x\ne y$ and $V(x-y)\le 0$. It then follows that $e^{2\lambda t}V(x(t)-y(t))<V(x-y)\le 0$ for any $t>0$. This entails that $V(x(t)-y(t))<0$, that is, $x(t)-y(t)\in {\rm Int}C^-(P)$ for all $t>0$. the flow generated by \eqref{E:ODE-sys} is strongly monotone w.r.t. $C^-(P)$.
\end{proof}

\vskip 2mm
\begin{rmk}
\textnormal{ (i) In \cite{SmithR-2,SmithR-3}, R.A. Smith assumed that $F$ satisfies
\begin{equation}\label{decay-11-smith}
(x-y)^*\cdot P\cdot [F(x)-F(y)+\lambda (x-y)]\le -\epsilon\abs{x-y}^2
\end{equation}
for any $x,y\in S$, where $\lambda,\epsilon>0$ are positive constants and $\abs{x-y}$ denote the Euclidean norm of the vector $x-y$. Clearly, Condition \eqref{decay-11} is weaker than \eqref{decay-11-smith}.
As a consequence, our Proposition \ref{P-B-thm-C-p} (which is based on the theory of monotone dynamical systems w.r.t. $2$-cone) can be viewed as a generalization of the Poincar\'{e}-Bendixson theorem of R. Smith in \cite{SmithR-2,SmithR-3}.}

\textnormal{ (ii) Since there is no $C^1$-smoothness assumption in Condition \eqref{decay-11}, we have improved Proposition 7 in Sanchez \cite{San09}. As we mentioned in Remarks \ref{Remark-on-ThmA} and \ref{Remark-on-ThmC} in Section 2, the smoothness assumption plays a key role in the approaches in \cite{San09}.}

\textnormal{ (iii) If one assumes $F$ is of class $C^1$, by following the same arguments in Ortega and Sanchez \cite[Remarks 1-2]{OSan}, one may obtain that
\eqref{decay-11} holds if and only if
\begin{equation*}\label{decay-11-smooth}
PDF(x)+(DF(x))^*P+\lambda P<0\,\, \text{ for any }x\in S,
\end{equation*} where $DF(x)^*$ stands for the transpose of the Jacobian $DF(x)$ and $<$ represents the usual order in the space of symmetric matrices. On the other hand, (7) in \cite [Proposition 7]{San09} is equivalent to existence of a (continuous) function $\lambda:\mathbb{R}^n\to \mathbb{R}$ such that \begin{equation*}\label{decay-11-smooth-OSan}
PDF(x)+(DF(x))^*P+\lambda(x) P<0\,\, \text{ for any }x\in \mathbb{R}^n.
\end{equation*}}
\end{rmk}


\begin{thebibliography}{99}

\bibitem{Ar} M.C. Arnaud, Le Closing Lemma en topologie $C^1$, Mem. Soc. Math. Fr. (N.S.)74, (1998).

\bibitem{Fied89} B. Fiedler, Discrete Ljapunov functionals and $\omega$-limit sets, RAIRO Mod\'{e}l. Math. Anal. Num\'{e}r. 23 (1989), 415-431.

\bibitem{FGW13} C. Fang, M. Gyllenberg and Y. Wang, Floquet bundles for tridiagonal competitive-cooperative systems and the dynamics of time-recurrent systems, SIAM J. Math. Anal 45 (2013), 2477-2498.

\bibitem{Fusco87} G. Fusco and W. Oliva, Jacobi matrices and transversality, Proc. Roy. Soc. Edinburgh Sect. A 109 (1988), 231-243.

\bibitem{FO1} G. Fusco and W. Oliva, A Perron theorem for the existence of invariant subspaces. Annali di Matematica Pura ed Applicata 160 (1991), 63-76.

\bibitem{foo16} J. Hale and P. Waltman, Persistence in infinite-dimensional systems, SIAM J. Math. anal 20 (1989), 388-395.

\bibitem{Hirsch1} M. W. Hirsch, Systems of differential equations which are competitive or cooperative. I. Limit sets, SIAM J. Math. Anal 13 (1982), 167-179.

\bibitem{foo2} M. W. Hirsch, Systems of differential equations that are competitive or cooperative. II. Convergence almost everywhere, SIAM J. Math. Anal 16 (1985), 423-439.

\bibitem{foo3} M. W. Hirsch, Systems of differential equations which are competitive or cooperative. III. Competing species, Nonlinearity 1 (1988), 51-71.

\bibitem{foo4} M. W. Hirsch, Systems of differential equations that are competitive or cooperative. IV. Structural stability in threedimensional systems, SIAM J. Math. Anal 21 (1990), 1225-1234.

\bibitem{foo5} M. W. Hirsch, Systems of differential equations that are competitive or cooperative. V. Convergence in 3-dimensional systems, J. Differential Equations 80 (1989), 94-106.

\bibitem{foo6} M. W. Hirsch, Systems of differential equations that are competitive or cooperative. VI. A local $C^r$ closing lemma for 3-dimensional systems, Ergodic Theory Dynam. Systems 11 (1991), 443-454.

\bibitem{HirSm} M. W. Hirsch, H. Smith, Monotone dynamical systems, in: Handbook of Differential Systems (Ordinary Differential Equations)., vol. 2, Elsevier, Amsterdam 2005, pp. 239-358.

\bibitem{JR} R. Joly and G. Raugel, Generic Morse-Smale property for the parabolic equation on the circle, Ann. Inst. H. Poincar¡äe, Anal. Non Lin¡äeaire 27 (2010), 1397-1440.

\bibitem{KaplanGlass} D. Kaplan and L. Glass, Understanding Nonlinear Dynamics, Springer-Verlag, New York, 1995.

\bibitem{foo7} U. Kirchgraber, K. J. Palmer, Geometry in the Neighborhood of Invariant Manifolds of Maps and Flows and Linearization, Pitman Res. Notes Math. Ser. Longman Scientific and Technical, Essex, 1990.

\bibitem{KLS} M. A. Krasnosel'skij, Je. A. Lifshits, and A. V. Sobolev: Positive Linear Systems, the Method of Positive Operators, Heldermann Verlag, Berlin, 1989.

\bibitem{LW} Z. Lian and Y. Wang, \newblock $K$-dimensional invariant cones of random dynamical system in $\mathbb{R}^n$ with applications, J. Differential Equations 259 (2015), 2807-2832.

\bibitem{foo13}J. Milnor, On the Concept of Attractor, Commun. Math. Phys 99 (1985), 177-195 .

\bibitem{foo14}J. Milnor, Comments: On the Concept of Attractor: Correction and Remarks, Commun. Math. Phys 102 (1985), 517-519.

\bibitem{Milton} J. Milton, Dynamics of Small Neural Neural Populations, American Mathematical Society, Providence, RI., 1996.

\bibitem{OSan} R. Ortega and L. A. Sanchez, Abstract competitive systems and orbital stability in $\mathbb{R}^3$, {\it Proc. Amer., Math. Soc} 128 (2000), 2911-2919.

\bibitem{M-PSmi90} J. Mallet-Paret and H. Smith, The Poincare-Bendixson theorem for monotone cyclic feedback systems, J. Dynam. Differential Equations 2 (1990), 367--421.

\bibitem{M-PSe96} J.~Mallet-Paret and G. Sell, Systems of differential delay equations: Floquet multipliers and discrete lyapunov functions, J. Differential Equations 125 (1996), 385--440.

\bibitem{M-PSe96-2} J. Mallet-Paret and G. Sell, The Poincar\'{e}-Bendixson theorem for monotone cyclic feedback systems with delay, J. Differential Equations 125 (1996), 441-489.

\bibitem{M-PN} J. Mallet-Paret and R.D. Nussbaum, Tensor products, positive linear operators, and delay-differential equations, J. Dynam. Differential Equations 25 (2013), 843-905.

\bibitem{Po1} P. Pol\'{a}\v{c}ik, Parabolic equations: asymptotic behavior and dynamics on
invariant manifolds, Handbook on Dynamical Systems, vol. 2, B. 835-883,
Fiedler (ed.), Amsterdam: Elsevier, 2002.

\bibitem{Po2} Pol\'{a}\v{c}ik and I. Tere\v{s}\v{c}\'{a}k, Convergence to cycles as a typical asymptotic behavior in smooth strongly monotone discrete-time dynamical systems, Arch. Ration. Mech. Anal 116 (1992), 339-360.

\bibitem{San09} L.A. Sanchez, Cones of rank 2 and the Poincar\'{e}-Bendixson property for a new class of monotone systems, J. Differential Equations 216 (2009), 1170-1190.

\bibitem{San10} L. A. Sanchez, Existence of periodic orbits for high-dimensional autonomous systems, J. Math. Anal. Appl 363 (2010), 409-418.

\bibitem{SWZ} W. Shen, Y. Wang and D. Zhou, Almost automorphic rotating waves of almost periodic scalar parabolic equations on the circle, preprint, 2015.

\bibitem{SY} W. Shen and Y. Yi, Almost automorphic and almost periodic dynamics in skew-product semiflows. Mem. Amer. Math. Soc. (1998), No.647.

\bibitem{Smillie} J. Smillie, Competitive and cooperative tridiagonal systems of differential equations, SIAM J. Math. Anal. 15 (1984), 530--534.

\bibitem{Smith91}\newblock H.~Smith, \newblock Periodic tridiagonal competitive and cooperative systems of differential equations, SIAM J. Math. Anal. 22 (1991), 1102--1109.

\bibitem{Smi95} H. Smith, Monotone Dynamical Systems, an Introduction to the Theory of Competitive and Cooperative Systems, Mathematical Surveys
and Monographs, Vol. 41, American Mathematical Society, Providence, RI, 1995.

\bibitem{SmithR-2} R. A. Smith, Existence of periodic orbits of autonomous ordinary differential equations, Proc. of Royal Soc. of Edinburgh A 85 (1980), 153-172.

\bibitem{SmithR-3} R. A. Smith, Orbital Stability for Ordinary Differential Equations, J. Diff. Eq. 69 (1987), 265-287.

\bibitem{SmithR-1} R. A. Smith, Orbital stability and inertial manifolds for certain reaction diffusion systems, Proc. London Math. Soc. 69 (1994), 91-120.

 \bibitem{Te} I. Tere\v{s}\v{c}\'{a}k, Dynamical systems with discrete Lyapunov functionals, Ph.D. thesis, Comenius University (1994).

\bibitem{WJ} Y. Wang and J. Jiang, The general properties of discrete-time competitive dynamical systems, J. Differential Equations 176 (2001), 470--493.


\bibitem{Wu} J. Wu, Introduction to Neural Dynamics and Signal Transmission Delay, de-Gruyter Series in Nonlinear Analysis, Berlin, 2001.
\end{thebibliography}
\end{document}